\documentclass[twoside,11pt]{article}
\usepackage{graphicx,amscd,amsfonts,amsmath,amsthm,amssymb,latexsym,amsfonts,color}
\usepackage{enumerate,framed,graphicx,graphics,algorithm,algorithmic,varwidth, pifont}
\usepackage{epsfig,float,epstopdf,array,bm,cite,mathrsfs}
\usepackage{cases,array,multirow}
\usepackage[table]{xcolor}
\usepackage{tikz,lmodern,mathtools,tikz,pgfplots,relsize,graphicx,float}
\usepackage[bookmarksnumbered, colorlinks,plainpages]{hyperref}
\newtheorem{theorem}{\bf Theorem}[section]
\newtheorem{lemma}[theorem]{\bf Lemma}

\newtheorem{example}[theorem]{\bf Example}

\newtheorem{corollary}[theorem]{\bf Corollary}

\begin{document}
	\title{\bf Inexact versions of several block-splitting preconditioners for indefinite least squares problems}
	\author{\small\bf  Mohaddese Kaveh Shaldehi$^\dag$,  Davod Khojasteh Salkuyeh$^{\dag}$\thanks{\noindent Corresponding author.}\\ 
	\textit{{\small $^\dag$Faculty of Mathematical Sciences, University of Guilan, Rasht, Iran}} \\
	\textit{{\small E-mails: mohaddese\_kaveh@phd.guilan.ac.ir, khojasteh@guilam.ac.ir, salkuyeh@gmail.com}}\\[2mm]
	}
	
	\date{}
	\maketitle
\vspace{-0.5cm}

	\noindent\hrulefill\\
	{\bf Abstract.} This paper introduces inexact versions of several block-splitting preconditioners for solving the  three-by-three block linear systems arising from a special class of indefinite least squares problems. We first establish the convergence conditions for the corresponding stationary iterative methods. Then, it follows that under these conditions, all eigenvalues of the preconditioned matrices are contained within a circle centered at $(1,0)$ with radius $1$. This property implies that these preconditioners are effective in accelerating the convergence of the GMRES method. Furthermore, we analyze the eigenpairs of the preconditioned matrices in detail and derive a theoretical upper bound on the number of GMRES iterations for solving the preconditioned systems. Ultimately, numerical experiments reveal the efficacy  of the proposed preconditioners.
	\\
		
		\noindent{\it  \footnotesize Keywords}: {\small Indefinite least squares problem, Block-splitting preconditioner, Stationary iterative method, Preconditioning techniques, Spectral clustering.\\
		\noindent
		\noindent{\it \footnotesize AMS Subject Classification}: 65F10, 65F50, 65F08.
		
		\noindent\hrulefill\\
		
		\pagestyle{myheadings}\markboth{M. Kaveh  Shaldehi, D.K. Salkuyeh}{Inexact versions of several block-splitting preconditioners for ILS problems}
		\thispagestyle{empty}

\section{Introduction}\label{sec.1}
The following standard notations are used throughout this paper. The symbols $I$, $A^T$, and $A^*$ represent the identity matrix of an appropriate size, the transpose, and the conjugate transpose of a matrix $A$, respectively. The notation $\|.\|_2$ represents the Euclidean norm for vectors and the corresponding spectral norm for matrices, while $\|.\|_1$ denotes the
$1$-norm for both vectors and matrices. The spectrum, spectral radius, and null space of $A$ are denoted by $\sigma(A)$, $\rho(A)$,  and $\mathcal{N}(A)$, respectively. 
The spectral condition number of a nonsingular matrix $A$ is denoted by $\kappa(A)$,
which is defined as $\kappa(A)=\|A\|_2 \|A^{-1}\|_2.$
For two column vectors $x$ and $y$, we write $(x;y)$ to denote the column vector $(x^T, y^T)^{T}.$ Finally, the notation $A\succ 0$ indicates that the matrix  $A$ is symmetric positive definite (SPD). 

This paper is concerned with the indefinite least squares (ILS) problems of the form
		\begin{align}
		\min_{x \in \mathbb{R}^n} (b - A x)^T  H_{p,q} (b - A x),
		\label{eq:ILS}
	\end{align}
	where $A \in \mathbb{R}^{m \times n}$ is a given large and sparse matrix with $m \geq n$, $b \in \mathbb{R}^m$ is a given vector, and
	 $ H_{p,q}\in \mathbb{R}^{m \times m}$ is the signature matrix of the form
	\[  H_{p,q} = \begin{pmatrix}
		I_{p}&{\bf{0}}\\
		{\bf{0}}&-I_{q}
	\end{pmatrix},\qquad p+q=m,\]
where $I_p $ and $I_q $ denote the identity matrices of sizes $p$ and $q$, respectively.
Clearly, when either $p = 0$ or $q = 0$ the problem simplifies to the standard least squares (LS) problem. However, when $p,q>0$, the quadratic form in problem \eqref{eq:ILS} is indefinite.
Such problems occur in numerous applications, including the solution of total least squares (TLS) problems \cite{golub1980,vanhuffel1991,chandrasekaran1998}, and the area of optimization known as robust or $H^\infty$ smoothing \cite{hassibi1993,sayed1996,chandrasekaran1998}. 
	
The normal equations corresponding to the ILS problem \eqref{eq:ILS} can be written as
	 	\begin{equation}
		A^T  H_{p,q} A x = A^T  H_{p,q} b.
		\label{eq:normal}
	\end{equation}
	It is clear that unlike the classical LS problem, the ILS problem \eqref{eq:ILS} is not guaranteed to have a unique solution. In addition, the Hessian of its objective function is $2A^T  H_{p,q} A$. Consequently, there exists a   unique solution to the ILS problem \eqref{eq:ILS}  if and only if $A^T  H_{p,q} A$ is SPD. This condition is assumed to hold throughout this paper.
	
	Many research efforts have been focused on numerical solutions to the ILS problem. For small and dense instances of the problem \eqref{eq:ILS}, Chandrasekaran et al. \cite{chandrasekaran1998} introduced a direct method employing QR decomposition and Cholesky factorization. Subsequently, Bojanczyk et al. \cite{bojanczyk2003} developed an implementation of hyperbolic QR factorization, utilizing both Householder and hyperbolic Givens transformations. Furthermore, Xu \cite{xu2004} presented a backward stable hyperbolic QR factorization method. Nonetheless, for large-scale sparse problems, these methods become excessively costly because of their significant storage needs and computational expenses. Consequently, the development of efficient iterative methods, particularly Krylov subspace methods, has become a primary research focus in recent years.
		
	Among the iteration schemes, matrix-splitting methods represent a classical family of solvers for the ILS problem. For instance, Liu and Liu \cite{liu2014} proposed two block SOR-based algorithms for solving the system associated with the ILS problem. Subsequently, to accelerate convergence, Song \cite{song2020} introduced the Unsymmetric SOR (USSOR) method, which employs two independent relaxation parameters. However, the performance of the USSOR method is highly sensitive to the choice of its relaxation parameters and finding their optimal values remains a major challenge for large-scale systems.
	
	More recently, the range of iterative methods for the ILS problem has further diversified. Zhang and Li \cite{zhang2023} introduced randomized splitting methods. Subsequently, Meng et al. \cite{meng2024} constructed a variable-parameter Uzawa method.
	Furthermore, Meng et al. \cite{MengJIAM} developed an enhanced Landweber method with momentum acceleration. Additionally, Meng et al. \cite{MengAML} presented an alternating direction implicit (ADI) method.
 Most recently, Li and Xin \cite{Li2026} developed an alternating Anderson-variable parameter Uzawa method, which utilizes Anderson acceleration to enhance the convergence of the variable-parameter Uzawa method for the ILS problems.
 
	Another approach to solving the problem \eqref{eq:ILS} is to reformulate the normal equations \eqref{eq:normal} into a larger  three-by-three block linear system and solving it by a Krylov subspace methods such as GMRES \cite{SaadBook,GMRES}. These methods are widely favoured for their convergence properties; however, their performance on sparse linear systems can be significantly enhanced by suitable preconditioners. In this context, Xin and Meng \cite{Xin} recently proposed three distinct block-splitting preconditioners. Subsequently, Li et al.  introduced another preconditioner of this type in \cite{Li2025}. Moreover, Li and Meng \cite{Li2025gmres} employed the GMRES method with an accelerated preconditioner to solve the associated  three-by-three block linear system. Most recently, Xin et al. \cite{XinJAMC} applied a shift-splitting iteration method to solve this system and utilized the induced preconditioner. 
	 Furthermore, Salkuyeh \cite{Khojasteh2026} presented a parameterized block-splitting preconditioner to accelerate the convergence of GMRES for solving this system. In the following, we explain some of these block-splitting preconditioning techniques in more detail.

According to the structure of the matrix $H_{p,q}$, we partition the matrix $A$ and the vector $b$ as
	\[
	A = \begin{pmatrix} A_1 \\ A_2 \end{pmatrix}, \quad b = \begin{pmatrix} b_1 \\ b_2 \end{pmatrix},
	\]
	where the matrix  $A_1 \in \mathbb{R}^{p \times n}$  has full column rank, $A_2 \in \mathbb{R}^{q \times n}$, $b_1 \in \mathbb{R}^{p}$, and $b_2 \in \mathbb{R}^{q}$.
	The reformulation of the ILS problem \eqref{eq:ILS} as the following nonsingular  three-by-three block linear system was first proposed by Xin and Meng \cite{Xin}
		\begin{equation}
		\mathcal{A} \mathbf{x}=
		\begin{pmatrix}
			I & A_1 & {\bf{0}} \\
			{\bf{0}} & P & A_2^T \\
			{\bf{0}} & A_2 & I
		\end{pmatrix}
	\begin{pmatrix} \delta_1 \\ x \\ \delta_2 \end{pmatrix}=
	\begin{pmatrix} b_1 \\ A_1^Tb_1 \\ b_2 \end{pmatrix}
	 = \tilde{b},
		\label{eq:block3x3}
	\end{equation}
	where $P=A_1^TA_1$ and $\delta=(\delta_1;\delta_2)=b-Ax$. For this  three-by-three block linear system, they constructed three distinct block splittings:
	\begin{align}
		\mathcal{A} &= 
			\begin{pmatrix}
				I & {\bf{0}} & {\bf{0}} \\
				{\bf{0}} & P & {\bf{0}} \\
				{\bf{0}} & {\bf{0}} & I
			\end{pmatrix}
-
			\begin{pmatrix}
				{\bf{0}} & -A_1 & {\bf{0}} \\
				{\bf{0}} & {\bf{0}}& -A_2^T  \\
				{\bf{0}} & -A_2 & {\bf{0}}
			\end{pmatrix}
		:=\mathcal{M}_{BS1}-\mathcal{N}_{BS1},  \label{BS1} \\
		\mathcal{A} &= 
			\begin{pmatrix}
				I & {\bf{0}} & {\bf{0}} \\
				{\bf{0}} & P & A_2^T \\
				{\bf{0}} & {\bf{0}} & I
			\end{pmatrix}
		-
			\begin{pmatrix}
				{\bf{0}} & -A_1 & {\bf{0}} \\
				{\bf{0}} & {\bf{0}} & {\bf{0}} \\
				{\bf{0}} & -A_2 & {\bf{0}}
			\end{pmatrix}
		:=\mathcal{M}_{BS2}-\mathcal{N}_{BS2},  \label{BS2} \\
		\mathcal{A} &= 
			\begin{pmatrix}
				I & A_1 & {\bf{0}} \\
				{\bf{0}} & P & {\bf{0}} \\
				{\bf{0}} & {\bf{0}} & I
			\end{pmatrix}
		-
			\begin{pmatrix}
				{\bf{0}} & {\bf{0}} & {\bf{0}} \\
				{\bf{0}} &{\bf{0}} &  -A_2^T \\
				{\bf{0}} & -A_2 & {\bf{0}}
			\end{pmatrix}
		:=\mathcal{M}_{BS3}-\mathcal{N}_{BS3}.  \label{BS3} 
 	\end{align}
  	Subsequently, Li et al. \cite{Li2025} proposed a block upper triangular (BUT) splitting with the aim of minimizing the norm of the difference between the matrix $\mathcal{A}$ and the block-splitting preconditioner:
 		\begin{align}
		\mathcal{A} &= 
			\begin{pmatrix}
				I & A_1 & {\bf{0}} \\
				{\bf{0}} & P & A_2^T \\
				{\bf{0}} & {\bf{0}} & I
			\end{pmatrix}
		-
			\begin{pmatrix}
				{\bf{0}} & {\bf{0}} & {\bf{0}} \\
				{\bf{0}} &{\bf{0}}& {\bf{0}} \\
				{\bf{0}} &-A_2 & {\bf{0}}
			\end{pmatrix}
 		:=\mathcal{M}_{BUT}-\mathcal{N}_{BUT}. \label{BUT}
\end{align}

	Implementing the preconditioners $\mathcal{M}_{BS1}, \mathcal{M}_{BS2}, \mathcal{M}_{BS3}$, and $ \mathcal{M}_{BUT}$ requires solving inner linear systems with the coefficient matrix $P = A_1^T A_1$. The main issue is that the matrix $P$ is not guaranteed to be well-conditioned, especially when the matrix $A_1$ is ill-conditioned. Hence, in many large and practical ILS problems, the outer GMRES iterations may fail to reach a sufficiently accurate solution within a prescribed number of iterations, due to the accumulation of errors from solving the inner systems.	
	
	To compensate for this weakness, we introduce inexact versions of these block-splitting preconditioners. The main idea is to replace the matrix $P$ within the block-splitting preconditioners $\mathcal{M}_{BS1}, \mathcal{M}_{BS2}, \mathcal{M}_{BS3}$, and $ \mathcal{M}_{BUT}$, with a well-conditioned SPD approximation. Our theoretical analysis shows that, under the convergence conditions of the corresponding stationary iterative methods, all the eigenvalues of the preconditioned matrices are located within a circle centered at $(1,0)$ with radius 1. This robust spectral clustering leads to accelerated convergence of the GMRES method. Furthermore, numerical experiments demonstrate the efficiency and robustness of the proposed preconditioners in comparison with recent methods.
	
	The paper is organized as follows. Section \ref{sec.2} introduces our new block-splitting preconditioners, the convergence analysis of the corresponding stationary iterative methods, and implementation of the proposed preconditioners. In Section \ref{sec.3}, we conduct a detailed spectral analysis of the preconditioned matrices; specifically, we analyze their eigenpairs and derive a theoretical upper bound on the number of GMRES iterations. Section \ref{sec.4} presents some numerical experiments to illustrate the efficiency of the proposed preconditioners. Finally, Section \ref{sec.5} offers some concluding remarks.
	
	\section{The inexact block-splitting preconditioners} \label{sec.2}		
	In this section, we introduce the inexact block-splitting (IBS) preconditioners. The construction of these preconditioners is based on replacing the matrix $P$ in the block-splitting preconditioners $\mathcal{M}_{BS1}$, $\mathcal{M}_{BS2}$, $\mathcal{M}_{BS3}$, and $\mathcal{M}_{BUT}$, with a well-conditioned SPD approximation, $\hat{P}.$ In other words, the IBS preconditioners are derived from the following matrix splittings
		\begin{align}
			\mathcal{A} &= 
			\begin{pmatrix}
				I & {\bf{0}} & {\bf{0}} \\
				{\bf{0}} & \hat{P} & {\bf{0}} \\
				{\bf{0}} & {\bf{0}} & I
			\end{pmatrix}
			-
			\begin{pmatrix}
				{\bf{0}} & -A_1 & {\bf{0}} \\
				{\bf{0}} & \hat{P}-P& -A_2^T  \\
				{\bf{0}} & -A_2 & {\bf{0}}
			\end{pmatrix}
			:=\mathcal{M}_1-\mathcal{N}_1,  \label{IBS1} \\
			\mathcal{A} &= 
			\begin{pmatrix}
				I & {\bf{0}} & {\bf{0}} \\
				{\bf{0}} &\hat{P} & A_2^T \\
				{\bf{0}} & {\bf{0}} & I
			\end{pmatrix}
			-
			\begin{pmatrix}
				{\bf{0}} & -A_1 & {\bf{0}} \\
				{\bf{0}} & \hat{P}-P & {\bf{0}} \\
				{\bf{0}} & -A_2 & {\bf{0}}
			\end{pmatrix}
			:=\mathcal{M}_2-\mathcal{N}_2, \label{IBS2}\\
			\mathcal{A} &= 
			\begin{pmatrix}
				I & A_1 & {\bf{0}} \\
				{\bf{0}} & \hat{P} & {\bf{0}} \\
				{\bf{0}} & {\bf{0}} & I
			\end{pmatrix}
			-
			\begin{pmatrix}
				{\bf{0}} & {\bf{0}} & {\bf{0}} \\
				{\bf{0}} &\hat{P}-P &  -A_2^T \\
				{\bf{0}} & -A_2 & {\bf{0}}
			\end{pmatrix}
			:=\mathcal{M}_3-\mathcal{N}_3,\label{IBS3}\\
			\mathcal{A} &= 
			\begin{pmatrix}
				I & A_1 & {\bf{0}} \\
				{\bf{0}} & \hat{P} & A_2^T \\
				{\bf{0}} & {\bf{0}} & I
			\end{pmatrix}
			-
			\begin{pmatrix}
				{\bf{0}} & {\bf{0}} & {\bf{0}} \\
				{\bf{0}} &\hat{P}-P& {\bf{0}} \\
				{\bf{0}} &-A_2 & {\bf{0}}
			\end{pmatrix}
			:=\mathcal{M}_4-\mathcal{N}_4. \label{IBS4}
		\end{align}
		The IBS preconditioners correspond to the matrices $\mathcal{M}_1,  \mathcal{M}_2,  \mathcal{M}_3$, and $ \mathcal{M}_4$, which are termed the IBS1, IBS2, IBS3, and IBS4 preconditioners, respectively.
		\subsection{The IBS iterative methods}
		Each of the proposed block splittings induces a stationary iterative method for solving the  three-by-three block linear system \eqref{eq:block3x3}. We term them the IBS iterative methods, which take the corresponding general form
		\begin{align}
			\mathbf{x}^{(k+1)}=\mathcal{G}_i\mathbf{x}^{(k)} + \mathcal{M}_i^{-1}\tilde{b}, \qquad i=1, 2, 3, 4,
			\label{eq:stationary} 
		\end{align}
		 where $\mathbf{x}^{(0)} \in \mathbb{R}^{m+n}$ is an arbitrary initial guess and  $\mathcal{G}_i=\mathcal{M}_i^{-1} \mathcal{N}_i=I-\mathcal{M}_i^{-1}\mathcal{A}$, $i=1,2,3,4$, are the corresponding iteration matrices. 
		 
		 Recalling that the matrices $P, \hat{P}$, and $A^T H_{p,q}A=A_1^T A_1 - A_2^T A_2$ are all SPD, we begin to analyze the convergence of the IBS iterative methods. For this analysis, we will need the following well-known lemma. 

	\begin{lemma}\cite{AxelBook}\label{Lem1}
		Let $\alpha$ and $\beta$ be the roots of the real quadratic equation $x^2-bx+c=0$. The modulus of both $\alpha$ and $\beta$ is less than one   if and only if $$
		|c|<1,\quad |b|<1+c.$$  
	\end{lemma}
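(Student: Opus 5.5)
The plan is to prove both directions by an elementary case analysis, using the relations $\alpha+\beta=b$ and $\alpha\beta=c$, and splitting according to the sign of the discriminant $\Delta=b^2-4c$. Write $f(x)=x^2-bx+c$, so that $f(1)=1-b+c$ and $f(-1)=1+b+c$. The second condition $|b|<1+c$ is then equivalent to $f(1)>0$ and $f(-1)>0$ holding together.

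\emph{Complex roots ($\Delta<0$).} Here $\beta=\bar\alpha$, so $|\alpha|^2=|\beta|^2=\alpha\beta=c$, and in particular $c>0$. Both moduli are therefore below one exactly when $c<1$, that is, when $|c|<1$. It remains to see that $|b|<1+c$ is automatic in this case. From $b^2<4c$ we get $|b|<2\sqrt c\le 1+c$, using the AM--GM inequality. Hence the two conditions of Lemma~\ref{Lem1} are equivalent to $|\alpha|,|\beta|<1$ whenever the roots are non-real.

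\emph{Real roots ($\Delta\ge 0$).} Both roots lie in the open interval $(-1,1)$ if and only if three things hold: $f(1)>0$, $f(-1)>0$, and the vertex $b/2$ lies in $(-1,1)$. This is the standard location-of-roots criterion for an upward-opening parabola.

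For necessity, suppose $|\alpha|,|\beta|<1$. Then $|c|=|\alpha||\beta|<1$. Moreover $f(\pm1)=(1\mp\alpha)(1\mp\beta)>0$, since each factor is positive, and this gives $|b|<1+c$.

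For sufficiency, suppose $|c|<1$ and $|b|<1+c$. Then $f(\pm1)>0$ and $|b|<1+c<2$, so the vertex $b/2$ lies in $(-1,1)$. The root-location criterion then places both real roots in $(-1,1)$.

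The only point needing care is the vertex condition in the real case: one must notice that it follows from $c<1$ combined with $|b|<1+c$. The complex case is routine once AM--GM is applied.
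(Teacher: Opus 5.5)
Your proof is correct. The paper gives no proof of Lemma~\ref{Lem1}; it only quotes the statement from \cite{AxelBook}. So there is no argument in the paper to compare against, and your case analysis on the sign of $b^2-4c$ supplies a self-contained proof of what the paper takes from the reference.

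The complex case is handled exactly right. There $|\alpha|^2=\alpha\beta=c>0$, and $|b|<2\sqrt{c}\le 1+c$ shows the second condition is automatic. In the real case you correctly see that the vertex condition is the one place where $c<1$ enters.

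The only ingredient you use without proof is the root-location criterion, and it takes one line. The condition $f(\pm1)>0$ means $\pm1\notin[\alpha,\beta]$, so $[\alpha,\beta]$ lies entirely in one of $(-\infty,-1)$, $(-1,1)$, $(1,\infty)$. The midpoint $b/2\in(-1,1)$ then forces the middle interval.

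The necessity direction does not need the case split. The identities $f(1)=(1-\alpha)(1-\beta)$ and $f(-1)=(1+\alpha)(1+\beta)$ give positive values both for real roots in $(-1,1)$ and for conjugate roots, where they equal $|1-\alpha|^2$ and $|1+\alpha|^2$.

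If you want a proof with no case split in either direction, substitute $x=(1+w)/(1-w)$. This maps $|x|<1$ to $\mathrm{Re}\,w<0$ and turns the equation into $(1+b+c)w^2+2(1-c)w+(1-b+c)=0$. The Routh--Hurwitz criterion for real quadratics says both roots have negative real part if and only if all coefficients have one sign. All negative is impossible, so this gives exactly $c<1$ and $|b|<1+c$, and $c>-1$ follows automatically. The degenerate case $1+b+c=0$ corresponds to the root $x=-1$ and fails both sides. Your route is longer but uses nothing beyond Vieta's formulas and the shape of a parabola.
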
	
	
	 \begin{theorem}\label{Theo1}
	 Let $\mathbf{x}^*$ be the unique solution to the linear system \eqref{eq:block3x3}. The IBS stationary iterative methods have the following convergence properties:
	 \begin{enumerate}[{\rm (i)}]
	\item The IBS1 and IBS3 methods converge to $\mathbf{x}^*$ for any initial guess if both the matrices $2\hat{P} - P - A_2^T A_2$ and $\hat{P} - A_2^T A_2$ are SPD, i.e.,
	$$
	2\hat{P} - P - A_2^T A_2 \succ 0 \quad \text{and} \quad \hat{P} - A_2^T A_2 \succ 0 \implies \rho(G_1) < 1 \text{ and } \rho(G_3) < 1.
	$$
	\item The IBS2 and IBS4 methods converge to $\mathbf{x}^*$ for any initial guess if the matrix $2\hat{P} - P + A_2^T A_2$ is SPD, i.e.,
	$$
	2\hat{P} - P + A_2^T A_2 \succ 0 \implies \rho(G_2) < 1 \text{ and } \rho(G_4) < 1.
	$$
\end{enumerate}
\end{theorem}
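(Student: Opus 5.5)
The plan is to compute the spectra of the iteration matrices $\mathcal{G}_i=\mathcal{M}_i^{-1}\mathcal{N}_i$ directly. I would use the generalized eigenproblem $\mathcal{N}_i\mathbf{y}=\lambda\mathcal{M}_i\mathbf{y}$ with $\mathbf{y}=(\delta_1;x;\delta_2)\in\mathbb{C}^{m+n}$ nonzero, and show that every eigenvalue satisfies $|\lambda|<1$ under the stated hypotheses. The eigenvalue $\lambda=0$ is harmless, so assume $\lambda\neq 0$ throughout.

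\textbf{IBS1 and IBS3.} I would write out the three block rows.
\begin{itemize}
\item For $\mathcal{G}_1$ they are $\lambda\delta_1=-A_1x$, $\lambda\hat{P}x=(\hat{P}-P)x-A_2^T\delta_2$ and $\lambda\delta_2=-A_2x$.
\item For $\mathcal{G}_3$ the first row becomes $\lambda(\delta_1+A_1x)=0$, while the other two rows are identical to those of $\mathcal{G}_1$.
\end{itemize}
In both cases $x\neq 0$: if $x=0$, the first and third rows force $\delta_1=\delta_2=0$. Eliminating $\delta_2=-A_2x/\lambda$ gives the quadratic pencil
$$\lambda^2\hat{P}x-\lambda(\hat{P}-P)x-A_2^TA_2x=0 .$$
Premultiplying by $x^*/(x^*\hat{P}x)$ and writing $\hat p=x^*\hat{P}x>0$, $p=x^*Px>0$, $a=x^*A_2^TA_2x\ge 0$ (all real, since the matrices are symmetric), we get
$$\lambda^2-\Big(1-\tfrac{p}{\hat p}\Big)\lambda-\tfrac{a}{\hat p}=0 .$$
This is a real quadratic and $\lambda$ is one of its roots, so Lemma \ref{Lem1} applies. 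The condition $|c|<1$ reads $a<\hat p$, which follows from $\hat{P}-A_2^TA_2\succ 0$. The condition $|b|<1+c$ reads $|\hat p-p|<\hat p-a$, which splits into two inequalities:
\begin{itemize}
\item $\hat p-p<\hat p-a$, i.e. $p>a$, which holds because $A_1^TA_1-A_2^TA_2\succ 0$;
\item $p-\hat p<\hat p-a$, i.e. $x^*(2\hat{P}-P-A_2^TA_2)x>0$, which is exactly the hypothesis in part (i).
\end{itemize}
Hence $\rho(\mathcal{G}_1),\rho(\mathcal{G}_3)<1$.

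\textbf{IBS2 and IBS4.} Here the $A_2^T\delta_2$ term has moved into $\mathcal{M}_i$, so the middle row becomes $(\hat{P}-P)x=\lambda(\hat{P}x+A_2^T\delta_2)$. The third row $\lambda\delta_2=-A_2x$ is unchanged, and the first row is either $\lambda\delta_1=-A_1x$ or $\lambda(\delta_1+A_1x)=0$; it is irrelevant once $x\neq0$, which follows as before. Substituting $\lambda\delta_2=-A_2x$ gives a linear rather than quadratic relation:
$$\lambda\hat{P}x=(\hat{P}-P+A_2^TA_2)x, \qquad \lambda=1-\frac{p-a}{\hat p}\in\mathbb{R}.$$
Then $\lambda<1$ because $p>a$, and $\lambda>-1$ is equivalent to $2\hat p-p+a>0$, i.e. $2\hat{P}-P+A_2^TA_2\succ 0$. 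This gives part (ii).

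The only delicate points are bookkeeping. One must check carefully that $x\neq 0$ for a genuine eigenvector, and note that complex eigenvectors still produce real Rayleigh quotients, so Lemma \ref{Lem1}, which is stated for real quadratics, applies. Finally, $\rho<1$ together with the nonsingularity of $\mathcal{A}$ gives convergence of \eqref{eq:stationary} to $\mathbf{x}^*$ for any initial guess.
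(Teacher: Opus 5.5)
Your proposal is correct and follows essentially the same route as the paper. You set up $\mathcal{N}_i\nu=\lambda\mathcal{M}_i\nu$, show the middle block is nonzero when $\lambda\neq 0$, and eliminate $\delta_2$; this gives a real quadratic handled by Lemma~\ref{Lem1} for IBS1/IBS3 and a Rayleigh-quotient formula for $\lambda$ for IBS2/IBS4, with the standing assumption $P-A_2^TA_2\succ 0$ supplying one side of each inequality, and your explicit remark that the Rayleigh quotients are real for complex eigenvectors (so Lemma~\ref{Lem1} legitimately applies) makes precise a point the paper leaves implicit.
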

\begin{proof}
For the zero eigenvalues of the iteration matrices, the proof of the theorem is obvious. Therefore, we prove each part of the theorem separately, only for nonzero eigenvalues of these matrices.
  
\vspace{0.5em}  
\noindent{\small Proof of (i).} 
Let $(\lambda, \nu)$ be an eigenpair of the iteration matrix $\mathcal{G}_1 = \mathcal{M}_1^{-1} \mathcal{N}_1$, where $\lambda\neq0$ and $\nu = (x; y; z) \neq {\bf{0}}$. The eigenvalue equation $\mathcal{N}_1 \nu = \lambda \mathcal{M}_1 \nu$ yields
\begin{align}\label{conv1.1}
	\begin{cases}
		-A_1 y=\lambda x,\\
		(\hat{P}-P)y - A_2^T z=\lambda \hat{P}y,\\
		-A_2y=\lambda z.
	\end{cases}
\end{align} 
Note that for $\lambda\neq0$, we have $y\neq{\bf{0}}$, otherwise, the first and third equation of \eqref{conv1.1} implies $x={\bf{0}}$ and $z={\bf{0}}$, contradicting that $\nu \neq{\bf{0}}$ is an eigenvector. Therefore, substituting the third equation of \eqref{conv1.1} into the second one, and pre-multiplying the resulting equation by $\lambda y^*$, yields
	$$  (y^* \hat{P} y) \lambda^2- (y^* (\hat{P}-P) y)\lambda  - y^* A_2^T A_2 y = 0. $$
Since $\hat{P}$ is SPD, both sides of the above equation can be divided by $y^* \hat{P} y$. Therefore, we have
	\begin{align} \label{eq:quadratic_final}
		\lambda^2 - \frac{y^*(\hat{P}-P)y}{y^* \hat{P}y} \lambda -  \frac{y^*A_2^T A_2 y}{y^* \hat{P}y}  = 0.
	\end{align}
From Lemma \ref{Lem1}, the roots of this quadratic equation satisfy $|\lambda| < 1$ if and only if
	\begin{align}
		\frac{y^*A_2^T A_2 y}{y^* \hat{P}y} < 1, \label{cond1}
	\end{align}
and
	\begin{align}
		\left| \frac{y^*(\hat{P}-P)y}{y^* \hat{P}y} \right| < 1 - \frac{y^*A_2^T A_2 y}{y^* \hat{P}y}. \label{cond2}
	\end{align}
Inequality \eqref{cond1} is equivalent to $y^*(\hat{P} - A_2^T A_2)y > 0$, which is satisfied if the matrix $\hat{P} - A_2^T A_2$ is SPD. 
Inequality \eqref{cond2} can be rewritten as
	$$ -y^*(\hat{P} - A_2^T A_2)y < y^*(\hat{P}-P)y < y^*(\hat{P} - A_2^T A_2)y. $$
The right-hand inequality simplifies to $$y^*(P - A_2^T A_2)y > 0.$$ Since we assumed $P - A_2^T A_2$ is SPD, this inequality always holds true. The left-hand inequality simplifies to $$y^*(2\hat{P} - P - A_2^T A_2)y > 0,$$ which is satisfied if $2\hat{P} - P - A_2^T A_2$ is SPD.
This completes the proof for the IBS1 method. For the IBS3 method, the result can be obtained similarly, as in the IBS1 method.
	\vspace{1em}
	
\noindent{\small Proof of (ii).} 
Let $(\lambda, \nu)$ be an eigenpair of $\mathcal{G}_2 = \mathcal{M}_2^{-1} \mathcal{N}_2$, where $\lambda\neq0$ and $\nu = (x; y; z) \neq {\bf{0}}$. The eigenvalue equation $\mathcal{N}_2 \nu = \lambda \mathcal{M}_2 \nu$ can be rewritten as
		\begin{align}\label{conv1.6}
		\begin{cases}
			-A_1 y=\lambda x,\\
			(\hat{P}-P)y =\lambda (\hat{P}y+A_2^T z),\\
			-A_2y=\lambda z.
		\end{cases}
	\end{align} 
Substituting the third equation of \eqref{conv1.6} into the second one, yields
$$(\hat{P}-P+A_2^TA_2)y =\lambda\hat{P}y,$$
By pre-multiplying the above equation by $y^*$, we can deduce that
	\begin{align}\label{lambda}
	 \lambda = \frac{y^*(\hat{P}-P+A_2^TA_2)y}{y^*\hat{P}y}. 
	\end{align}
Thus, the condition $|\lambda| < 1$ is equivalent to
	$$ -y^*\hat{P}y < y^*(\hat{P}-P+A_2^TA_2)y < y^*\hat{P}y. $$
The right-hand inequality is equivalent to $$y^*(P-A_2^TA_2)y > 0.$$ The above inequality always holds since $P-A_2^TA_2 $ is assumed to be SPD. The left-hand inequality is equivalent to $$y^*(2\hat{P}-P+A_2^TA_2)y > 0,$$ which is satisfied if $2\hat{P}-P+A_2^TA_2$ is SPD.
The proof for the IBS4 method is similar to that of the IBS2 method. Thus, the proof is complete.
\end{proof}
\noindent From Theorem \ref{Theo1}, we can obtain the following corollary for the IBS iterative methods.
\begin{corollary}\label{coroll1}
	If $\hat{P}-P$ is a symmetric positive semidefinite (SPSD) matrix, then all the IBS stationary iterative methods converge to the unique solution of \eqref{eq:block3x3} for any initial guess. Furthermore, all the eigenvalues of $\mathcal{G}_2$ and $\mathcal{G}_4$ are contained within the positive real interval $(0,1)$.
\end{corollary}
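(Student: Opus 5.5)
The plan is to verify the hypotheses of Theorem \ref{Theo1} under the assumption $\hat{P}-P\succeq 0$, and then to read off the location of the eigenvalues of $\mathcal{G}_2$ and $\mathcal{G}_4$ from the Rayleigh-quotient formula \eqref{lambda} obtained in its proof. Throughout, we use the standing assumptions that $P=A_1^TA_1\succ 0$, $\hat{P}\succ 0$ and $P-A_2^TA_2\succ 0$.

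\textbf{Part (i) of Theorem \ref{Theo1}.} We write
\[
2\hat{P}-P-A_2^TA_2=2(\hat{P}-P)+(P-A_2^TA_2).
\]
This is the sum of an SPSD matrix and an SPD matrix, so it is SPD. Similarly,
\[
\hat{P}-A_2^TA_2=(\hat{P}-P)+(P-A_2^TA_2)\succ 0.
\]
Hence $\rho(\mathcal{G}_1)<1$ and $\rho(\mathcal{G}_3)<1$.

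\textbf{Part (ii) of Theorem \ref{Theo1}.} We write
\[
2\hat{P}-P+A_2^TA_2=2(\hat{P}-P)+P+A_2^TA_2.
\]
Here $P\succ 0$ and the other two terms are SPSD, so the matrix is SPD. Hence $\rho(\mathcal{G}_2)<1$ and $\rho(\mathcal{G}_4)<1$. Together, the two parts give convergence of all four IBS methods for any initial guess.

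\textbf{Eigenvalues of $\mathcal{G}_2$ and $\mathcal{G}_4$.} In the proof of part (ii), every nonzero eigenvalue $\lambda$ of $\mathcal{G}_2$ with eigenvector $(x;y;z)$ has $y\neq 0$ and satisfies $(\hat{P}-P+A_2^TA_2)y=\lambda\hat{P}y$. The matrix $\hat{P}-P+A_2^TA_2$ is symmetric and $\hat{P}\succ 0$, so this is a symmetric-definite generalized eigenproblem and $\lambda$ is real. By \eqref{lambda}, $\lambda$ equals a quotient whose numerator $y^*(\hat{P}-P)y+\|A_2y\|_2^2$ is nonnegative and whose denominator is positive, so $\lambda\ge 0$. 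Combining this with $|\lambda|<1$ from the previous step gives $\lambda\in(0,1)$ for every nonzero eigenvalue. The same computation, which the paper notes is analogous, applies to $\mathcal{G}_4$. I would first confirm the analogue for $\mathcal{G}_4$: with $\mathcal{M}_4$, the eigen-equations give $-A_2y=\lambda z$ and $(\hat{P}-P)y=\lambda(\hat{P}y+A_2^Tz)$. The first row no longer determines $x$ directly, but $y\neq0$ still follows for $\lambda\neq0$, and one arrives at the same generalized eigenproblem.

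\textbf{Main obstacle.} The delicate point is the word ``positive.'' Zero eigenvalues do occur: $\mathcal{N}_2$ has a zero third block column, so vectors $(x;0;z)$ are mapped to zero. Also, a nonzero eigenvalue would vanish exactly when $y\in\mathcal{N}(\hat{P}-P)\cap\mathcal{N}(A_2)$. I would therefore state the interval claim for the nonzero eigenvalues, which is how the paper handles zero eigenvalues in Theorem \ref{Theo1}. Equivalently, one can say that the whole spectrum lies in $[0,1)$ and that every nonzero eigenvalue lies in $(0,1)$. I would make this interpretation explicit rather than attempt to rule out $\lambda=0$.
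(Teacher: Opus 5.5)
Your proposal is correct and follows the paper's route: you verify the hypotheses of Theorem~\ref{Theo1} by writing each matrix as an SPSD piece plus an SPD piece, then read off the sign from \eqref{lambda}; the only difference is that you use $2(\hat{P}-P)+P+A_2^TA_2$ where the paper uses $(2\hat{P}-P-A_2^TA_2)+2A_2^TA_2$. Your restriction to nonzero eigenvalues is more accurate than the paper's proof, which asserts $\hat{P}-P+A_2^TA_2\succ 0$ (in general it is only SPSD) and overlooks that $\mathcal{G}_2$ and $\mathcal{G}_4$ always have the eigenvalue $0$, because their first and third block columns vanish.
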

\begin{proof}
Since the matrices $P-A_2^T A_2$  and $\hat{P}-P$ are, respectively,   SPD and  SPSD, we deduce that
\begin{align*}
	\hat{P}-A_2^T A_2=(\hat{P}-P)+(P-A_2^TA_2)\succ 0,
\end{align*}
and
\begin{align*}
	2\hat{P}-P-A_2^TA_2=2(\hat{P}-P)+(P-A_2^TA_2)\succ0.
\end{align*}
Consequently, since $A_2^T A_2$ is also SPSD, we have
$$2\hat{P}-P+A_2^{T}A_2=(2\hat{P}-P-A_2^TA_2)+2A_2^TA_2\succ0.$$
Therefore, all conditions of Theorem \ref{Theo1} are obtained, i.e., $\rho(\mathcal{G}_i)<1$, $i=1,2,3,4$.
In addition, the matrices $\hat{P}-P+A_2^TA_2$ and $\hat{P}$ are both SPD. Thus, from \eqref{lambda} we conclude that all the eigenvalues of $\mathcal{G}_2$ and $\mathcal{G}_4$ are located in the positive real interval $(0,1)$. This gives the desired result.
\end{proof}
 \subsection{Implementation details}
We have already analyzed the convergence of the IBS iterative methods and proved that if the conditions of Theorem \ref{Theo1} are satisfied, then for each iteration matrix $\mathcal{G}\in \{\mathcal{G}_1, \mathcal{G}_2, \mathcal{G}_3, \mathcal{G}_4 \}$, we have
$\rho(\mathcal{G})<1.$
This implies that for the corresponding preconditioner $\mathcal{M}\in \{\mathcal{M}_1, \mathcal{M}_2, \mathcal{M}_3, \mathcal{M}_4 \}$,  the eigenvalues of the preconditioned matrix
$\mathcal{M}^{-1}\mathcal{A}=I-\mathcal{G}$
are clustered within a circle centered at $(1,0)$ with radius $1$. This spectral clustering ensures that, the IBS preconditioners are effective in accelerating the convergence of a Krylov subspace method, such as GMRES, for solving the  three-by-three block linear system \eqref{eq:block3x3}. Applying such methods to solve the preconditioned system
$
	\mathcal{M}^{-1}\mathcal{A}\mathbf{x}	=\mathcal{M}^{-1}\tilde{b}
$
requires, at each iteration, solving a system of the form
$
	\mathcal{M}z=r.
$
To detail the implementation of this procedure, we partition the vectors $z$ and $r$ as $z=(z_1;z_2;z_3)$ and $r=(r_1;r_2;r_3).$ The algorithms for solving such systems are then given as follows:

\bigskip

\textbf{Algorithm 1.} Solving  $\mathcal{M}_1 z=r$ \\[-0.5cm]
\begin{itemize}
\item[1.]  $z_1:=r_1$;
\item[2.] Solve $\hat{P}z_2=r_2$ for $z_2$;
\item[3.] $z_3:=r_3.$
\end{itemize} 

\bigskip

\textbf{Algorithm 2.} Solving  $\mathcal{M}_2 z=r$ \\[-0.5cm]
\begin{itemize}
\item [1.] $z_1:=r_1$;
\item[2.]  $z_3:=r_3$;
\item[3.] Solve $\hat{P}z_2=r_2-A_2^Tz_3$ for $z_2$.
\end{itemize} 

\bigskip

\textbf{Algorithm 3.} Solving  $\mathcal{M}_3 z=r$ \\[-0.5cm]
\begin{itemize}
	\item [1.] Solve $\hat{P}z_2=r_2$ for $z_2$;
	\item[2.]  $z_1:=r_1-A_1z_2$;
	\item[3.] $z_3:=r_3.$
\end{itemize} 

\bigskip

\textbf{Algorithm 4.} Solving  $\mathcal{M}_4 z=r$ \\[-0.5cm]
\begin{itemize}
	\item [1.] $z_3:=r_3$;
	\item[2.]  Solve $\hat{P}z_2=r_2-A_2^Tz_3$ for $z_2$;
	\item[3.] $z_1:=r_1-A_1z_2$.
\end{itemize} 

\bigskip

According to the above algorithms, it follows that the implementation of the IBS preconditioners, necessitates solving inner linear systems with the coefficient matrix $\hat{P}$, which is SPD. Thus, the inner systems can be solved exactly using the Cholesky factorization and inexactly using the conjugate gradient (CG) method. 

\subsection{Choosing the matrix $\hat{P}$}

The matrix $\hat{P}$ should be chosen such that it is more well-conditioned than the matrix  $P=A_1^TA_1$. For this purpose, we set $\hat{P}=\alpha I+P$ for some positive number $\alpha$. In this case, it is clear that $\hat{P}-P=\alpha I$ is an SPD matrix. Thus, it follows from Corollary \ref{coroll1} that all the IBS iterative methods would be convergent. 

It is worth mentioning that adding a positive value $\alpha$ to the main diagonal of $P$ improves its condition number. In fact, we have
\[
\kappa(\hat{P})=\frac{\alpha+\lambda_{\max}(P)}{\alpha+\lambda_{\min}(P)} < \frac{\lambda_{\max}(P)}{\lambda_{\min}(P)} =  \kappa(P).
\] 
To be more specific, if the matrix $P$ is scaled such that its largest eigenvalue equals 1, then for the spectral condition number of $\hat{P}=\alpha I+P$ we have (see \cite{BenziSD})
\[
\kappa(\hat{P})=\frac{\alpha+1}{\alpha+\lambda_{\min}(P)} <\frac{\alpha+1}{\alpha}= 1+\frac{1}{\alpha}.
\]
For example, if we set $\alpha=0.2$, then the spectral condition number of $\hat{P}$ is bounded by $6$. 
In practice, a practical choice for the parameter $\alpha$, as suggested in \cite{BenziActa}, is given by 
\[
\alpha=\|A_1\|^2,
\]
where $ \| . \|$ is a matrix norm. This choice is intended to balance the scale between the matrices $I$ and $A_1^TA_1$, and it will be used for all the numerical examples throughout this paper.

\section{Spectral analysis of the preconditioned matrices}\label{sec.3}

In this section, we investigate the spectral properties of the IBS preconditioned matrices. First, in the following four theorems, we analyze the structure of the eigenpairs of these matrices.
\begin{theorem} \label{Theorprec1}
	Assume that the conditions of Theorem \ref{Theo1} hold for the IBS1 method. Then, the preconditioned matrix $\mathcal{M}_1^{-1}\mathcal{A}$ has $p+q-\text{rank}(A_2)+h$ linearly independent eigenvectors and the structure of its eigenpairs has the following properties:
	\begin{enumerate}[{\rm (i)}]
		\item The eigenvalue $1$ has a geometric multiplicity of $p+q-\text{rank}(A_2)$. The corresponding linearly independent eigenvectors include $p$ vectors of the form $(x_i;{\bf{0}};{\bf{0}})$ and $q-\text{rank}(A_2)$ vectors of the form $({\bf{0}};{\bf{0}};z_i)$, where $\{x_i\}_{i=1}^{p}$ and $\{z_i\}_{i=1}^{q-\text{rank}(A_2)}$ are bases for $\mathbb{R}^p$ and $\mathcal{N}(A_2^T)$, respectively;
		
\item Let $h$ $(0\leq h \leq n)$ be the total number of linearly independent eigenvectors associated with the non-unit eigenvalues. Then, each non-unit eigenvalue, $\mu_j ,$ has a geometric multiplicity $h_j$, where $ 0\leq h_j \leq h \leq n $. The corresponding $h_j$ linearly independent eigenvectors are of the form $ \left(\frac{1}{\mu_j-1}A_1y_i; y_i; \frac{1}{\mu_j-1}A_2y_i\right),$ where $\{y_i\}_{i=1}^{h_j}$ is a set of linearly independent vectors satisfying the relation $ (1-\mu_j)(P-\mu_j\hat{P})y_i = A_2^T A_2 y_i. $ 
\end{enumerate}
\end{theorem}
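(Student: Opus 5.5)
The plan is to write the eigenvalue problem $\mathcal{M}_1^{-1}\mathcal{A}\nu=\mu\nu$ as the generalized problem $\mathcal{A}\nu=\mu\mathcal{M}_1\nu$ with $\nu=(x;y;z)\neq\mathbf{0}$. Componentwise this reads
\begin{align*}
x+A_1y=\mu x,\qquad Py+A_2^Tz=\mu\hat{P}y,\qquad A_2y+z=\mu z.
\end{align*}
First I would note that $\mu\neq 0$, since $\mathcal{A}$ and $\mathcal{M}_1$ are nonsingular. Then I would split into the cases $\mu=1$ and $\mu\neq1$.

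\emph{Case $\mu=1$.} The first and third equations give $A_1y=\mathbf{0}$. Because $A_1$ has full column rank, this forces $y=\mathbf{0}$. The second equation then reduces to $A_2^Tz=\mathbf{0}$, and $x$ is arbitrary. So the eigenspace of $1$ is exactly $\{(x;\mathbf{0};z): x\in\mathbb{R}^p,\ z\in\mathcal{N}(A_2^T)\}$. Taking a basis of $\mathbb{R}^p$ and a basis of $\mathcal{N}(A_2^T)$, which has dimension $q-\mathrm{rank}(A_2)$, produces the stated eigenvectors. Their linear independence is clear from the block structure, and this gives geometric multiplicity $p+q-\mathrm{rank}(A_2)$.

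\emph{Case $\mu\neq1$.} The first and third equations give $x=\frac{1}{\mu-1}A_1y$ and $z=\frac{1}{\mu-1}A_2y$. If $y=\mathbf{0}$ then $x=z=\mathbf{0}$, so $y\neq\mathbf{0}$. Substituting $z$ into the second equation and multiplying by $\mu-1$ gives $(\mu-1)(P-\mu\hat{P})y+A_2^TA_2y=\mathbf{0}$, that is, $(1-\mu)(P-\mu\hat{P})y=A_2^TA_2y$. Conversely, any $y\neq\mathbf{0}$ satisfying this relation yields an eigenvector of the stated form. The map $y\mapsto\nu$ is injective and linear for fixed $\mu_j$, so linearly independent $y_i$ give linearly independent eigenvectors, and vice versa. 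Hence $h_j$ equals the dimension of the solution space of the relation, which is at most $n$.

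For the count, eigenvectors belonging to distinct eigenvalues are linearly independent. The total $h$ over the non-unit eigenvalues is therefore $\sum_j h_j$, and the full set of independent eigenvectors numbers $p+q-\mathrm{rank}(A_2)+h$. The main obstacle is justifying $h\le n$ rather than merely $h_j\le n$ for each $j$. I would take the eigenvectors of all non-unit eigenvalues together and look at their $y$-components. If $\sum c_i y_i=\mathbf{0}$, then the first and third blocks of $\sum c_i\nu_i$ are $(\sum c_i\frac{1}{\mu_i-1}A_1y_i;\ \mathbf{0};\ \ast)$. To finish cleanly, I would instead observe that the combined set of $p+q-\mathrm{rank}(A_2)+h$ independent eigenvectors lives in $\mathbb{R}^{m+n}$. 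This gives $h\le n+\mathrm{rank}(A_2)$, which is not yet sufficient. So I would argue directly that the $y$-parts of independent eigenvectors for non-unit eigenvalues are independent. Suppose $\sum c_iy_i=\mathbf{0}$. Then $w=\sum c_i\nu_i$ has $y$-component zero. Applying $\mathcal{M}_1^{-1}\mathcal{A}-I$, whose action on vectors with $y=\mathbf{0}$ lands in the eigenvalue-$1$ structure, and using that Krylov-type combinations of the $\nu_i$ stay in their span, I would aim to show $w$ lies in both the span of the $\nu_i$ and the unit eigenspace. That intersection is trivial, so $w=\mathbf{0}$ and all $c_i=0$, which gives $h\le n$.
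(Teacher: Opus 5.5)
Your treatment of the cases $\mu=1$ and $\mu\neq1$ is the same as the paper's: the same componentwise system, full column rank of $A_1$ to force $y=\mathbf{0}$, and substitution of $z=\frac{1}{\mu-1}A_2y$ to get $(1-\mu)(P-\mu\hat P)y=A_2^TA_2y$. Your conclusions $h_j\le n$ and $h\le n+\text{rank}(A_2)$ are correct.

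The gap is the final step. There you try to show that the $y$-components of eigenvectors for \emph{all} non-unit eigenvalues are jointly independent, in order to get $h\le n$. The mechanism you propose is wrong. For a vector with zero middle block,
\[
(\mathcal{M}_1^{-1}\mathcal{A}-I)(x;\mathbf{0};z)=(\mathbf{0};\hat P^{-1}A_2^Tz;\mathbf{0}).
\]
This lies in the unit eigenspace $\{(x;\mathbf{0};z):A_2^Tz=\mathbf{0}\}$ only if $A_2^Tz=\mathbf{0}$, so nothing forces $w$ into that eigenspace.

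More decisively, no argument can close this step, because $h\le n$ is false under the stated hypotheses. Take $p=q=n=1$, $A_1=1$, $A_2=\tfrac12$ and $\hat P=2$; this is the paper's own choice $\hat P=\alpha I+P$ with $\alpha=1$. Then $P-A_2^TA_2=\tfrac34$, $2\hat P-P-A_2^TA_2=\tfrac{11}{4}$ and $\hat P-A_2^TA_2=\tfrac74$ are all positive, and
\[
\mathcal{M}_1^{-1}\mathcal{A}=\begin{pmatrix}1&1&0\\0&\tfrac12&\tfrac14\\0&\tfrac12&1\end{pmatrix}.
\]
This matrix has the eigenvalue $1$ together with the two roots $\mu=\frac{3\pm\sqrt3}{4}$ of $(1-\mu)(1-2\mu)=\tfrac14$. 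Three distinct eigenvalues give three independent eigenvectors, so $h=2>n=1$. The two $y$-components are nonzero scalars in $\mathbb{R}^1$ and hence dependent.

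In general the relation is a quadratic eigenvalue problem of size $n$ with $2n$ eigenvalues. None of them equals $1$ when $A_2$ has full column rank, since at $\mu=1$ the relation reads $A_2^TA_2y=\mathbf{0}$. So $h$ equals $2n$ whenever these eigenvalues are distinct, and the sharp bound is the one you already derived, $h\le n+\text{rank}(A_2)$.

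The paper's proof does not supply this step either: it simply asserts $0\le h\le n$ and omits the independence argument. You located the weak point correctly, but the right fix is to weaken the claimed bound to $h\le n+\text{rank}(A_2)$ (keeping $h_j\le n$), not to try to prove $h\le n$.
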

\begin{proof}
	Let $(\mu, \nu)$ be an eigenpair of the preconditioned matrix $\mathcal{M}_1^{-1}\mathcal{A}$, where $\nu=(x;y;z)\neq {\bf{0}}$. The eigenvalue equation $\mathcal{M}_1^{-1}\mathcal{A}\nu = \mu\nu$ can be equivalently written in the  three-by-three block form as
	\begin{align} \label{precsystem1block}
		\begin{pmatrix}
			I & A_1 &{\bf{0}} \\
			{\bf{0}} & P & A_2^T \\
			{\bf{0}} & A_2 & I
		\end{pmatrix}
		\begin{pmatrix}
			x \\ y \\ z
		\end{pmatrix}
		= \mu
		\begin{pmatrix}
			I & {\bf{0}} & {\bf{0}} \\
			{\bf{0}} & \hat{P} & {\bf{0}} \\
			{\bf{0}} & {\bf{0}} & I
		\end{pmatrix}
		\begin{pmatrix}
			x \\ y \\ z
		\end{pmatrix},
	\end{align}
The above equation can be equivalently rewritten as
	\begin{align} \label{precsystem1}
		\begin{cases}
			A_1 y=(\mu -1)x, \\
			(P - \mu\hat{P})y= -A_2^T z,\\
			A_2 y=(\mu-1)z.
		\end{cases}
	\end{align}
	We now analyze the system \eqref{precsystem1} for the cases $\mu=1$ and $\mu \neq 1$.
	
	If $\mu=1$, then we have
	\begin{align} \label{precsystem1.1}
		\begin{cases}
			A_1 y = {\bf{0}}, \\
			(P-\hat{P})y = -A_2^T z, \\
			A_2 y={\bf{0}}.
		\end{cases}
	\end{align}
Since $A_1$ has a full column rank, the first equation of \eqref{precsystem1.1} implies that $y={\bf{0}}$. Substituting this into the second equation yields $A_2^T z = {\bf{0}}$, which means $z \in \mathcal{N}(A_2^T)$. Therefore, the linearly independent eigenvectors corresponding to the eigenvalue $1$ include $p$ vectors of the form $(x_i;{\bf{0}};{\bf{0}})$, where $\{x_i\}_{i=1}^{p}$ is a basis for $\mathbb{R}^p$, and $q-\text{rank}(A_2)$ vectors of the form $({\bf{0}};{\bf{0}};z_i)$, where $\{z_i\}_{i=1}^{q-\text{rank}(A_2)}$ is a basis for $\mathcal{N}(A_2^T)$. Consequently, the geometric multiplicity of the eigenvalue $1$ is $p+q-\text{rank}(A_2)$.

For $\mu \neq 1$, from \eqref{precsystem1} we have
\begin{align}\label{precsystem1.2}
\begin{cases}
x = \frac{1}{\mu-1} A_1 y, \\
(P - \mu\hat{P})y =-A_2^Tz,\\
z = \frac{1}{\mu-1} A_2 y.
\end{cases}
\end{align}
	Note that $y \neq {\bf{0}}$, otherwise we have $x={\bf{0}}$ and $z={\bf{0}}$, which contradicts that $\nu$ is an eigenvector. By substituting $z=\frac{1}{\mu-1} A_2 y$ into the second equation of \eqref{precsystem1.2}, we obtain
\begin{align}\label{relationprec1}
	(1-\mu)(P - \mu\hat{P})y = A_2^T A_2 y.
\end{align}
	Therefore, there exist $h$ ($0 \leq h \leq n$) linearly independent eigenvectors of the form
	$$ \left( \frac{1}{\mu-1}A_1y_i; y_i; \frac{1}{\mu-1}A_2y_i \right), $$
	corresponding to non-unit eigenvalues $\mu \neq 1$, where the vectors $\{y_i\}_{i=1}^h$ are linearly independent vectors satisfying the relation \eqref{relationprec1}.	In fact, each distinct non-unit eigenvalue, say $\mu_j$, corresponds to $h_j$ of these $h$ linearly independent eigenvectors. Therefore, $h_j$ is its geometric multiplicity.

The proof of the independence of eigenvectors is straightforward and omitted here for conciseness.
\end{proof}

\begin{theorem} \label{Theorprec2}
	Assume that the conditions of Theorem \ref{Theo1} hold for the IBS2 method. Then, the preconditioned matrix $\mathcal{M}_2^{-1}\mathcal{A}$ has $p+q+h$ linearly independent eigenvectors and the structure of its eigenpairs has the following properties:
	\begin{enumerate}[{\rm (i)}]
		\item The eigenvalue $1$ has a geometric multiplicity of  $p+q$. The corresponding linearly independent eigenvectors are of the form $(x_i;{\bf{0}};z_i)$, where $\{x_i\}_{i=1}^{p}$ and $\{z_i\}_{i=1}^{q}$ are bases of $\mathbb{R}^p$ and $\mathbb{R}^q$, respectively; 
		
		\item The total number of its linearly independent eigenvectors associated with the non-unit eigenvalues is $h$ $(0\leq h \leq n)$, where each eigenvalue, $\mu_j \neq 1,$ has a geometric multiplicity $h_j$ $ (0\leq h_j \leq h \leq n) $. The corresponding $h_j$ linearly independent eigenvectors are of the form $\left( \frac{1}{\mu_j-1}A_1y_i; y_i; \frac{1}{\mu_j-1}A_2y_i\right) $, where $\{y_i\}_{i=1}^{h_j}$ is a set of linearly independent vectors satisfying the eigenvalue problem $\hat{P}^{-1}(P-A_2^TA_2)y_i = \mu_j y_i; $ 
		
		\item All eigenvalues of the preconditioned matrix $\mathcal{M}_2^{-1}\mathcal{A}$ are located in the positive real interval $(0,2)$.
	\end{enumerate}
\end{theorem}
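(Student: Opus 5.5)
The argument follows the proof of Theorem \ref{Theorprec1}. Let $(\mu,\nu)$ be an eigenpair of $\mathcal{M}_2^{-1}\mathcal{A}$ with $\nu=(x;y;z)\neq\mathbf{0}$, and write the eigenvalue equation as $\mathcal{A}\nu=\mu\mathcal{M}_2\nu$. Using \eqref{IBS2}, this becomes the block system
\begin{align*}
\begin{cases}
A_1y=(\mu-1)x,\\
Py+A_2^Tz=\mu(\hat{P}y+A_2^Tz),\\
A_2y=(\mu-1)z.
\end{cases}
\end{align*}
I will treat the cases $\mu=1$ and $\mu\neq1$ separately.

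For $\mu=1$, the first equation gives $A_1y=\mathbf{0}$. Since $A_1$ has full column rank, $y=\mathbf{0}$. The second equation then reduces to $(P-\hat{P})y=\mathbf{0}$, and the $A_2^Tz$ terms cancel on both sides. This is the key difference from IBS1: no restriction is placed on $z$. The third equation also holds trivially. Hence every vector $(x;\mathbf{0};z)$ is an eigenvector for $\mu=1$, and the eigenspace has dimension exactly $p+q$. Taking bases $\{x_i\}$ of $\mathbb{R}^p$ and $\{z_i\}$ of $\mathbb{R}^q$ gives the stated eigenvectors, which proves (i).

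For $\mu\neq1$, solve the first and third equations for $x=\frac{1}{\mu-1}A_1y$ and $z=\frac{1}{\mu-1}A_2y$. As before, $y\neq\mathbf{0}$, since otherwise $\nu=\mathbf{0}$. Rewrite the second equation as $(P-\mu\hat{P})y=(\mu-1)A_2^Tz$. Substituting $z$, the right-hand side becomes $A_2^TA_2y$, so $(P-A_2^TA_2)y=\mu\hat{P}y$, that is, $\hat{P}^{-1}(P-A_2^TA_2)y=\mu y$. Conversely, every eigenpair $(\mu,y)$ of $\hat{P}^{-1}(P-A_2^TA_2)$ with $\mu\ne1$ yields an eigenvector of the stated form. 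Since $\hat{P}^{-1}(P-A_2^TA_2)$ is $n\times n$, there are at most $n$ independent such $y$, so $h\le n$.

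It remains to show that the resulting family of $p+q+h$ vectors is linearly independent. The vectors for distinct eigenvalues are automatically independent, and within a fixed $\mu_j$ independence follows from independence of the $y_i$ through the middle block. Alternatively, one can argue directly: in a vanishing combination, the middle block forces the coefficients of the non-unit eigenvectors to vanish because the $y_i$ are independent, and then the coefficients of the unit eigenvectors vanish as well.

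For (iii), I will use the identity $\mathcal{M}_2^{-1}\mathcal{A}=I-\mathcal{G}_2$. The eigenvalues $\mu=1$ certainly lie in $(0,2)$. For $\mu\ne1$, $\mu$ is an eigenvalue of $\hat{P}^{-1}(P-A_2^TA_2)$. This matrix is similar to the symmetric matrix $\hat{P}^{-1/2}(P-A_2^TA_2)\hat{P}^{-1/2}$, so $\mu$ is real. It is also positive, since $P-A_2^TA_2\succ0$. Moreover $\mu=1-\lambda$, where $\lambda=\frac{y^*(\hat{P}-P+A_2^TA_2)y}{y^*\hat{P}y}$ is exactly the quantity in \eqref{lambda}. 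Under the hypothesis of Theorem \ref{Theo1}(ii), the proof there shows $|\lambda|<1$, hence $\mu\in(0,2)$. Equivalently, positivity of $\mu$ comes from $P-A_2^TA_2\succ0$, and $\mu<2$ comes from $2\hat{P}-P+A_2^TA_2\succ0$.

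I expect no step to be a real obstacle. The only care needed is in (i): one must note that the $A_2^Tz$ terms cancel, which is what gives multiplicity $p+q$ rather than $p+q-\mathrm{rank}(A_2)$. The only other point requiring attention is the realness argument in (iii).
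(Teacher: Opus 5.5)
Your proof is correct and takes essentially the paper's route. You split the block eigen-equation into the cases $\mu=1$, where the $A_2^Tz$ terms cancel and leave the full $(p+q)$-dimensional eigenspace, and $\mu\neq1$, which reduces to $\hat{P}^{-1}(P-A_2^TA_2)y=\mu y$; for (iii) you combine $\mu=1-\lambda$ and Theorem \ref{Theo1} with the positive Rayleigh quotient $y^*(P-A_2^TA_2)y/y^*\hat{P}y$, exactly as the paper does. Your similarity argument for realness is harmless but redundant, since that Rayleigh quotient already shows $\mu$ is real.
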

\begin{proof}
	Since the proofs of parts (i) and (ii) of this theorem are  similar to those of Theorem \ref{Theorprec1}, they are not repeated here. 
	Thus, we only provide the proof for part (iii). 
	We know that each eigenvalue $\mu$ of the preconditioned matrix $\mathcal{M}_2^{-1}\mathcal{A}$ is related to a corresponding eigenvalue $\lambda$ of the iteration matrix $\mathcal{G}_2$ by the equation $\mu=1-\lambda$. Thus, under the conditions of Theorem \ref{Theo1}, $\mu$ lies in the disk centered at $(1,0)$ with radius $1$.
	
	The proof of (iii) is obvious for the case $\mu=1$. Furthermore, from part (ii), it can be deduced that $\mu\neq 1$ is an eigenvalue of $\hat{P}^{-1}(P-A_2^TA_2)$ corresponding to an eigenvector $y$. Therefore, $\hat{P}^{-1}(P-A_2^TA_2)y=\mu y.$ This is equivalent to 
	$$ (P-A_2^TA_2)y= \mu \hat{P}y. $$
	Pre-multiplying both sides of the above equation by $y^*$ gives
	$$ \mu = \frac{y^*(P-A_2^TA_2) y}{y^*\hat{P}y}. $$
	Since $P-A_2^TA_2$ and $\hat{P}$ are both SPD, $\mu$ is positive and located in the real interval $(0,2).$ Thus, the proof is completed. 
\end{proof}

\begin{theorem} \label{Theorprec3}
	Assume that the conditions of Theorem \ref{Theo1} hold for the IBS3 method. Then, the preconditioned matrix $\mathcal{M}_3^{-1}\mathcal{A}$ has $p+q+n-2\text{rank}(A_2)+h$ linearly independent eigenvectors and the structure of its eigenpairs has the following properties:
\begin{enumerate}[{\rm (i)}]
			\item The eigenvalue $1$ has a geometric multiplicity of  $p+q+n-2\text{rank}(A_2)$. The corresponding linearly independent eigenvectors include $p$ vectors of the form $(x_i;{\bf{0}};{\bf{0}})$, $n-\text{rank}(A_2)$ vectors of the form $({\bf{0}};y_i;{\bf{0}})$, and $q-\text{rank}(A_2)$ vectors of the form $({\bf{0}};{\bf{0}};z_i)$, where $\{x_i\}_{i=1}^{p}$, $\{y_i\}_{i=1}^{n-\text{rank}(A_2)}$, and $\{z_i\}_{i=1}^{q-\text{rank}(A_2)}$ are bases for $\mathbb{R}^p$, $\mathcal{N}(A_2)$ and $\mathcal{N}(A_2^T)$, respectively; 
			
		\item The total number of its linearly independent eigenvectors associated with the non-unit eigenvalues is $h$ $(0\leq h \leq n)$, where each eigenvalue, $\mu_j \neq 1,$ has a geometric multiplicity $h_j$ $ (0\leq h_j \leq h \leq n) $. The corresponding $h_j$ linearly independent eigenvectors are of the form $\left( -A_1y_i; y_i; \frac{1}{\mu_j-1}A_2y_i\right) $, where $\{y_i\}_{i=1}^{h_j}$ is a set of linearly independent vectors satisfying the relation $ (1-\mu_j)(P-\mu_j\hat{P})y_i = A_2^T A_2 y_i. $ 
\end{enumerate}
\end{theorem}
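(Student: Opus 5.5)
The plan mirrors the proof of Theorem \ref{Theorprec1}. I write the eigenvalue problem $\mathcal{M}_3^{-1}\mathcal{A}\nu=\mu\nu$, with $\nu=(x;y;z)\neq{\bf 0}$, as the generalized problem $\mathcal{A}\nu=\mu\mathcal{M}_3\nu$. Using the block form of $\mathcal{M}_3$ in \eqref{IBS3}, this is the system
\begin{align*}
\begin{cases}
(1-\mu)(x+A_1y)={\bf 0},\\
(P-\mu\hat P)y=-A_2^Tz,\\
A_2y=(\mu-1)z.
\end{cases}
\end{align*}
I then split into the cases $\mu=1$ and $\mu\neq1$, as before.

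\textbf{Case $\mu=1$.} The first equation imposes no restriction, so $x$ is free. The third equation gives $A_2y={\bf 0}$, that is, $y\in\mathcal{N}(A_2)$. The second equation reduces to $(P-\hat P)y=-A_2^Tz$. I will exhibit three families of eigenvectors and count them, following the wording of part (i).
\begin{itemize}
\item Taking $y={\bf 0}$ and $z={\bf 0}$ with $x$ running over a basis of $\mathbb{R}^p$ gives $p$ vectors $(x_i;{\bf 0};{\bf 0})$.
\item Taking $x={\bf 0}$ and $y={\bf 0}$ forces $A_2^Tz={\bf 0}$. This gives $q-\text{rank}(A_2)$ vectors $({\bf 0};{\bf 0};z_i)$ with $\{z_i\}$ a basis of $\mathcal{N}(A_2^T)$.
\item Taking $y_i$ over a basis of $\mathcal{N}(A_2)$ gives $n-\text{rank}(A_2)$ vectors of the form $({\bf 0};y_i;{\bf 0})$.
\end{itemize}
The three families have disjoint block supports, so the union is linearly independent. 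The total is $p+q+n-2\,\text{rank}(A_2)$, which is the stated geometric multiplicity.

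The step I expect to be the main obstacle is justifying the third family. The second equation must hold with $z={\bf 0}$, which requires $(\hat P-P)y_i={\bf 0}$ on $\mathcal{N}(A_2)$. I will state explicitly where this is used. For general $\hat P$, the equation $(P-\hat P)y=-A_2^Tz$ with $y\in\mathcal{N}(A_2)$ is solvable in $z$ only if $(\hat P-P)y\in\mathcal{R}(A_2^T)$, and $\mathcal{R}(A_2^T)=\mathcal{N}(A_2)^{\perp}$. So I would first try to handle this part by the paper's own reduction: substitute $y\in\mathcal{N}(A_2)$ into the second equation and read off the vectors $({\bf 0};y_i;{\bf 0})$ as in the statement.

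\textbf{Case $\mu\neq1$.} The first equation gives $x=-A_1y$, and the third gives $z=\frac{1}{\mu-1}A_2y$. Then $y\neq{\bf 0}$, since otherwise $x={\bf 0}$ and $z={\bf 0}$, contradicting $\nu\neq{\bf 0}$. Substituting $z$ into the second equation and multiplying by $(1-\mu)$ gives
\[(1-\mu)(P-\mu\hat P)y=A_2^TA_2y,\]
which is the same relation as \eqref{relationprec1}. Hence the eigenvectors for $\mu_j\neq1$ are $\left(-A_1y_i;y_i;\frac{1}{\mu_j-1}A_2y_i\right)$, with $\{y_i\}_{i=1}^{h_j}$ linearly independent solutions of that relation. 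The number $h$ of such independent vectors satisfies $h\le n$, because the map $\nu\mapsto y$ is injective on these vectors and $y\in\mathbb{R}^n$.

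\textbf{Global independence.} I will use the standard argument. Eigenvectors belonging to distinct eigenvalues are independent. Within a fixed $\mu_j\neq1$, independence of the $y_i$ transfers to the full vectors through the middle block. This gives $p+q+n-2\,\text{rank}(A_2)+h$ linearly independent eigenvectors in total.
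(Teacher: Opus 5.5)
\textbf{There is a genuine gap in your third family for $\mu=1$, and it cannot be closed: claim (i), and with it the total count, is false under the stated hypotheses.} Substituting $y\in\mathcal{N}(A_2)$ and $z=\mathbf{0}$ into the second equation does not let you ``read off'' the vectors $(\mathbf{0};y_i;\mathbf{0})$. It leaves the condition $(\hat P-P)y=\mathbf{0}$, which the conditions of Theorem~\ref{Theo1} neither assume nor imply.

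The condition fails for the paper's own choice $\hat P=\alpha I+P$. Suppose $A_2y=\mathbf{0}$ and $(\hat P-P)y=A_2^Tz$. Then $\alpha\|y\|_2^2=(A_2y)^Tz=0$, so $y=\mathbf{0}$. The eigenspace of $1$ is therefore $\{(x;\mathbf{0};z):A_2^Tz=\mathbf{0}\}$, of dimension $p+q-\mathrm{rank}(A_2)$. This is strictly less than the claimed $p+q+n-2\,\mathrm{rank}(A_2)$ whenever $\mathrm{rank}(A_2)<n$.

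For a concrete case, take $A_1=I_2$, $A_2=\begin{pmatrix}\tfrac12 & 0\end{pmatrix}$, $\hat P=2I_2$; all hypotheses hold. With $e_2$ the second unit vector, $\mathcal{A}(\mathbf{0};e_2;\mathbf{0})=(e_2;e_2;\mathbf{0})$ while $\mathcal{M}_3(\mathbf{0};e_2;\mathbf{0})=(e_2;2e_2;\mathbf{0})$. So $(\mathbf{0};e_2;\mathbf{0})$ is not an eigenvector at all, and the multiplicity of $1$ is $2$, not $3$.

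Exhibiting independent eigenvectors also only bounds the geometric multiplicity from below. To get equality you must solve the $\mu=1$ system completely; its solution set is $\mathbb{R}^p\times\{(y;z):A_2y=\mathbf{0},\ (\hat P-P)y=A_2^Tz\}$. This is exactly where the analogy with Theorem~\ref{Theorprec1}, which the paper invokes instead of giving a proof, breaks down. There $A_1y=\mathbf{0}$ forces $y=\mathbf{0}$; here the first equation is vacuous.

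Part (i) holds only under an extra hypothesis such as $\mathcal{N}(A_2)\subseteq\mathcal{N}(\hat P-P)$. Examples are $\hat P=P$, or $A_2$ of full column rank as in all the paper's experiments. For SPSD $\hat P-P$, the correct multiplicity is $p+q-\mathrm{rank}(A_2)+\dim(\mathcal{N}(\hat P-P)\cap\mathcal{N}(A_2))$, in parallel with Theorem~\ref{Theorprec4}.

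\textbf{Your justification of $h\le n$ is also invalid, and the bound itself fails.} The map $\nu\mapsto y$ is injective on the eigenspace of a \emph{fixed} $\mu_j\neq 1$, which gives $h_j\le n$. It is not injective on the span of eigenvectors belonging to different $\mu_j$. The relation $(1-\mu)(P-\mu\hat P)y=A_2^TA_2y$ is a quadratic eigenvalue problem with up to $2n$ eigenvalues, and distinct eigenvalues can share the same $y$.

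Take $n=p=q=1$, $A_1=1$, $A_2=\tfrac12$, $\hat P=2$. This is again $\hat P=\alpha I+P$ with $\alpha=1$, and the conditions of Theorem~\ref{Theo1} hold. Then $\mathcal{M}_3^{-1}\mathcal{A}$ has the simple eigenvalues $1$ and $\mu_\pm=(3\pm\sqrt3)/4$, with eigenvectors $(-1;1;1\pm\sqrt3)$ for $\mu_\pm$. Hence $h=2>n$.

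What your setup does give is $h\le n+\mathrm{rank}(A_2)$. Since $x=-A_1y$ for non-unit eigenvectors, their trailing parts $(y;z)$ are linearly independent eigenvectors of the block $\Phi$ in \eqref{commonstruc} for eigenvalues $\neq 1$. They therefore lie in $\mathcal{R}(\Phi-I)$, whose dimension is at most $n+\mathrm{rank}(A_2)$.
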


\begin{theorem} \label{Theorprec4}
	Assume that the conditions of Theorem \ref{Theo1} hold for the IBS4 method. Then, the preconditioned matrix $\mathcal{M}_4^{-1}\mathcal{A}$ has $p+q+\dim(\mathcal{N}(\hat{P}-P)\cap \mathcal{N}(A_2))+h$ linearly independent eigenvectors and the structure of its eigenpairs has the following properties:
	\begin{enumerate}[{\rm (i)}]
		\item The eigenvalue $1$ has a geometric multiplicity of  $p+q+\dim(\mathcal{N}(\hat{P}-P)\cap \mathcal{N}(A_2))$. The corresponding linearly independent eigenvectors include $p+q$ vectors of the form $(x_i;{\bf{0}};z_i)$ and $\dim(\mathcal{N}(\hat{P}-P)\cap \mathcal{N}(A_2))$ vectors of the form $({\bf{0}};y_i;{\bf{0}})$, where $\{x_i\}_{i=1}^{p}$, $\{y_i\}_{i=1}^{\dim(\mathcal{N}(\hat{P}-P)\cap \mathcal{N}(A_2))}$, and $\{z_i\}_{i=1}^{q}$ are bases of $\mathbb{R}^p$, $\mathcal{N}((\hat{P}-P)\cap \mathcal{N}(A_2))$, and $\mathbb{R}^q,$ respectively;
		
		\item The total number of its linearly independent eigenvectors associated with the non-unit eigenvalues is $h$ $(0\leq h \leq n)$, where each eigenvalue, $\mu_j \neq 1,$ has a geometric multiplicity $h_j$ $ (0\leq h_j \leq h \leq n) $. The corresponding $h_j$ linearly independent eigenvectors are of the form $\left( -A_1y_i; y_i; \frac{1}{\mu_j-1}A_2y_i\right) $, where $\{y_i\}_{i=1}^{h_j}$ is a set of linearly independent vectors satisfying the eigenvalue problem $\hat{P}^{-1}(P-A_2^TA_2)y_i = \mu_j y_i $;
		
		\item All eigenvalues of the preconditioned matrix $\mathcal{M}_4^{-1}\mathcal{A}$ are located in the positive real interval $(0,2).$ 
\end{enumerate}
\end{theorem}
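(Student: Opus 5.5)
The plan mirrors the proofs of Theorems \ref{Theorprec1} and \ref{Theorprec2}. Let $(\mu,\nu)$ be an eigenpair of $\mathcal{M}_4^{-1}\mathcal{A}$ with $\nu=(x;y;z)\neq{\bf 0}$. I would write the eigenvalue equation as $\mathcal{A}\nu=\mu\mathcal{M}_4\nu$ and expand it blockwise. Using \eqref{eq:block3x3} and \eqref{IBS4}, this gives
\begin{align*}
\begin{cases}
(1-\mu)(x+A_1y)={\bf 0},\\
Py-\mu\hat{P}y=(\mu-1)A_2^Tz,\\
A_2y=(\mu-1)z.
\end{cases}
\end{align*}
The first row uses the fact that the first block rows of $\mathcal{A}$ and $\mathcal{M}_4$ coincide. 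I would then split into the cases $\mu=1$ and $\mu\neq1$.

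\emph{Case $\mu=1$.} The first equation is void, the third gives $A_2y={\bf 0}$, and the second reduces to $(P-\hat{P})y={\bf 0}$. Thus $x\in\mathbb{R}^p$ and $z\in\mathbb{R}^q$ are arbitrary, while $y\in\mathcal{N}(\hat{P}-P)\cap\mathcal{N}(A_2)$. The eigenspace of $1$ is therefore exactly $\mathbb{R}^p\times\big(\mathcal{N}(\hat{P}-P)\cap\mathcal{N}(A_2)\big)\times\mathbb{R}^q$. A basis is obtained by combining the $p+q$ vectors $(x_i;{\bf 0};z_i)$, i.e.\ the unit vectors $(e_i;{\bf 0};{\bf 0})$ and $({\bf 0};{\bf 0};e_j)$, with vectors $({\bf 0};y_i;{\bf 0})$, where the $y_i$ form a basis of the intersection. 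This yields geometric multiplicity $p+q+\dim(\mathcal{N}(\hat{P}-P)\cap\mathcal{N}(A_2))$ and proves (i).

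\emph{Case $\mu\neq1$.} The first equation forces $x=-A_1y$, and the third gives $z=\frac{1}{\mu-1}A_2y$. Hence $y\neq{\bf 0}$, since otherwise $\nu={\bf 0}$. Substituting $z$ into the second equation, the factor $(\mu-1)$ cancels and we get $Py-\mu\hat{P}y=A_2^TA_2y$, that is,
\[
\hat{P}^{-1}(P-A_2^TA_2)y=\mu y .
\]
This gives the eigenvector form $\left(-A_1y_i;y_i;\frac{1}{\mu_j-1}A_2y_i\right)$ claimed in (ii). Conversely, any eigenvector $y$ of $\hat{P}^{-1}(P-A_2^TA_2)$ with eigenvalue $\mu_j\neq1$ produces an eigenvector of $\mathcal{M}_4^{-1}\mathcal{A}$ in this way. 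The map $y\mapsto(-A_1y;y;\frac{1}{\mu_j-1}A_2y)$ is injective, so linear independence of the $y_i$ transfers to the eigenvectors. Independence across distinct eigenvalues, and from the eigenvectors for $1$, is the standard fact that eigenvectors of distinct eigenvalues are independent. The count $h\le n$ holds because every such $y$ lies in $\mathbb{R}^n$ and $\hat{P}^{-1}(P-A_2^TA_2)$ has at most $n$ independent eigenvectors.

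For (iii), $\mu=1$ is trivial. For $\mu\neq1$, pre-multiplying $(P-A_2^TA_2)y=\mu\hat{P}y$ by $y^*$ gives
\[
\mu=\frac{y^*(P-A_2^TA_2)y}{y^*\hat{P}y},
\]
which is real and positive because $P-A_2^TA_2\succ0$ and $\hat{P}\succ0$. For the upper bound, each $\mu$ equals $1-\lambda$ with $\lambda\in\sigma(\mathcal{G}_4)$, and Theorem \ref{Theo1}(ii) gives $|\lambda|<1$. So $\mu$ lies in the open unit disk centred at $(1,0)$, and being real it lies in $(0,2)$.

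The only delicate step I expect is the bookkeeping in case $\mu=1$. Unlike in Theorems \ref{Theorprec1} and \ref{Theorprec2}, here $y$ need not vanish, so one must check that the full solution set is exactly the product space above. That is what justifies the extra $\dim(\mathcal{N}(\hat{P}-P)\cap\mathcal{N}(A_2))$ term. Everything else is routine substitution.
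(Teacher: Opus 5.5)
Your proof is correct and follows the route the paper intends. The paper omits this proof as analogous to Theorems \ref{Theorprec1} and \ref{Theorprec2}, and you do exactly that: expand $\mathcal{A}\nu=\mu\mathcal{M}_4\nu$ blockwise, split on $\mu=1$ versus $\mu\neq1$, and obtain (iii) from the Rayleigh quotient together with $|1-\mu|<1$ from Theorem \ref{Theo1}(ii). You also correctly spotted why the $\mu=1$ case differs from Theorem \ref{Theorprec2}: the first block rows of $\mathcal{A}$ and $\mathcal{M}_4$ coincide, so $y={\bf 0}$ is no longer forced, and this produces the extra $\dim(\mathcal{N}(\hat{P}-P)\cap\mathcal{N}(A_2))$ term.
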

Since the proofs of Theorems \ref{Theorprec3} and \ref{Theorprec4} are similar to those of  Theorems \ref{Theorprec1} and  \ref{Theorprec2}, respectively, they are not presented here to avoid repetition. In the following theorem, we derive an upper bound on the degree of the minimal polynomial for any of the IBS preconditioned matrices.
\begin{theorem}\label{Theorminimal}
	Let the IBS preconditioners $\mathcal{M}_i$, $i=1,2,3,4$, be defined as in \eqref{IBS1}, \eqref{IBS2}, \eqref{IBS3}, and \eqref{IBS4}. Then, for each of the preconditioned matrices $\mathcal{M}_i^{-1}\mathcal{A}$, $i=1,2,3,4$, the minimal polynomial has a degree of at most $n+q+1$.
\end{theorem}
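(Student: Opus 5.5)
The plan is to write each preconditioned matrix as $\mathcal{M}_i^{-1}\mathcal{A}=I-\mathcal{G}_i$ with $\mathcal{G}_i=\mathcal{M}_i^{-1}\mathcal{N}_i$, and to show that $\mathcal{G}_i$ has rank at most $n+q$. The key observation is that in each splitting \eqref{IBS1}--\eqref{IBS4} the first block column of $\mathcal{N}_i$ is zero. Hence $\mathcal{G}_i$ has the block form
\[
\mathcal{G}_i=\begin{pmatrix}{\bf 0} & \Theta_{1} & \Theta_{2}\\ {\bf 0} & \Theta_{3} & \Theta_{4}\\ {\bf 0} & \Theta_{5} & \Theta_{6}\end{pmatrix},
\]
with column sizes $p$, $n$, $q$. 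I will compute the blocks $\Theta_k$ explicitly using Algorithms 1--4. For instance, for IBS1 we have $\Theta_1=-A_1$, $\Theta_3=\hat{P}^{-1}(\hat{P}-P)$, and so on, though only the block pattern matters.

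Write $\mathcal{G}_i=\begin{pmatrix}{\bf 0} & B\\ {\bf 0} & C\end{pmatrix}$, where $C=\begin{pmatrix}\Theta_3&\Theta_4\\ \Theta_5&\Theta_6\end{pmatrix}$ is $(n+q)\times(n+q)$ and $B=(\Theta_1\ \Theta_2)$. By induction, $\mathcal{G}_i^{k}=\begin{pmatrix}{\bf 0}&BC^{k-1}\\ {\bf 0}&C^k\end{pmatrix}$ for $k\ge1$. Let $\phi(t)=\sum_{k=0}^{d}c_k t^k$ be the minimal polynomial of $C$, which has degree $d\le n+q$ by Cayley--Hamilton. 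Then $\psi(t)=t\,\phi(t)$ annihilates $\mathcal{G}_i$. Indeed, $\mathcal{G}_i\phi(\mathcal{G}_i)$ has lower-right block $C\phi(C)={\bf 0}$ and upper-right block $B\sum_k c_kC^{k}=B\phi(C)={\bf 0}$. So the minimal polynomial of $\mathcal{G}_i$ has degree at most $n+q+1$.

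Since $\mathcal{M}_i^{-1}\mathcal{A}=I-\mathcal{G}_i$ is a polynomial of degree one in $\mathcal{G}_i$, the substitution $t\mapsto 1-t$ transfers the annihilating polynomial. Thus $\chi(s)=(1-s)\,\phi(1-s)$ annihilates $\mathcal{M}_i^{-1}\mathcal{A}$, and it has degree at most $n+q+1$, which proves the claim for all four $i$ at once.

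The only step requiring care is confirming the zero first block column of $\mathcal{G}_i$ in every case. This holds because $\mathcal{N}_i$ has zero first block column, and $\mathcal{M}_i^{-1}\mathcal{N}_i$ inherits this regardless of the structure of $\mathcal{M}_i$. As a remark, this also yields the GMRES bound of at most $n+q+1$ iterations.
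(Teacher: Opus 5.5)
Your proof is correct and is essentially the paper's argument: the paper multiplies out each $\mathcal{M}_i^{-1}\mathcal{A}$ to obtain the common form $\begin{pmatrix} I & \Psi\\ {\bf 0} & \Phi\end{pmatrix}$ and then uses Cayley--Hamilton for $\Phi$ to show that $(t-1)$ times the characteristic polynomial of $\Phi$ annihilates it. This is exactly your $t\,\phi(t)$ for $\mathcal{G}_i$ transported by $t\mapsto 1-t$, with $B=-\Psi$ and $C=I-\Phi$; the only differences are cosmetic (you read the zero first block column directly off $\mathcal{N}_i$ instead of computing the four products, and you use the minimal rather than the characteristic polynomial of the trailing block).
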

\begin{proof}
The  three-by-three block structures of the IBS preconditioned matrices are obtained as follows: 
	\begin{align}
		\mathcal{M}_1^{-1} \mathcal{A}&=
		\begin{pmatrix}
			I & {\bf{0}} & {\bf{0}} \\
			{\bf{0}} & \hat{P}^{-1} &{\bf{0}}  \\
			{\bf{0}} & {\bf{0}} & I
		\end{pmatrix}
		\begin{pmatrix}
			I & A_1 & {\bf{0}} \\
			{\bf{0}} & P & A_2^T \\
			{\bf{0}} & A_2 & I
		\end{pmatrix}=
		\begin{pmatrix}
			I&A_1&{\bf{0}}\\
			{\bf{0}}&\hat{P}^{-1}P&\hat{P}^{-1}A_2^T\\
			{\bf{0}}&A_2&I
		\end{pmatrix},\label{precmat1}\\
		\mathcal{M}_2^{-1} \mathcal{A}&=
		\begin{pmatrix}
			I & {\bf{0}} & {\bf{0}} \\
			{\bf{0}} & \hat{P}^{-1} &-\hat{P}^{-1}A_2^T  \\
			{\bf{0}} & {\bf{0}} & I
		\end{pmatrix}
		\begin{pmatrix}
			I & A_1 & {\bf{0}} \\
			{\bf{0}} & P & A_2^T \\
			{\bf{0}} & A_2 & I
		\end{pmatrix}=
		\begin{pmatrix}
			I&A_1&{\bf{0}}\\
			{\bf{0}}&\hat{P}^{-1}(P-A_2^TA_2)&{\bf{0}}\\
			{\bf{0}}&A_2&I
		\end{pmatrix},\label{precmat2}\\
		\mathcal{M}_3^{-1} \mathcal{A}&=
		\begin{pmatrix}
			I & -A_1\hat{P}^{-1} & {\bf{0}} \\
			{\bf{0}} & \hat{P}^{-1} &{\bf{0}}  \\
			{\bf{0}} & {\bf{0}} & I
		\end{pmatrix}
		\begin{pmatrix}
			I & A_1 & {\bf{0}} \\
			{\bf{0}} & P & A_2^T \\
			{\bf{0}} & A_2 & I
		\end{pmatrix}=
		\begin{pmatrix}
			I&A_1(I-\hat{P}^{-1}P)&-A_1\hat{P}^{-1}A_2^T\\
			{\bf{0}}&\hat{P}^{-1}P&\hat{P}^{-1}A_2^T\\
			{\bf{0}}&A_2&I
		\end{pmatrix},\label{precmat3}\\
		\mathcal{M}_4^{-1} \mathcal{A}&=
		\begin{pmatrix}
			I & -A_1\hat{P}^{-1} &A_1\hat{P}^{-1} A_2^T \\
			{\bf{0}} & \hat{P}^{-1} &-\hat{P}^{-1}A_2^T  \\
			{\bf{0}} & {\bf{0}} & I
		\end{pmatrix}
		\begin{pmatrix}
			I & A_1 & {\bf{0}} \\
			{\bf{0}} & P & A_2^T \\
			{\bf{0}} & A_2 & I
		\end{pmatrix}=
		\begin{pmatrix}
			I&A_1(I-\hat{P}^{-1}(P-A_2^TA_2))&{\bf{0}}\\
			{\bf{0}}&\hat{P}^{-1}(P-A_2^TA_2)&{\bf{0}}\\
			{\bf{0}}&A_2&I
		\end{pmatrix}.\label{precmat4}
	\end{align}
We can see that each IBS preconditioned matrix, denoted by $\mathcal{M}^{-1}\mathcal{A}$, gives a common  three-by-three block structure of the form
\begin{align}\label{commonstruc}
	\mathcal{M}^{-1}\mathcal{A}=\begin{pmatrix}
		I&\Psi\\
		{\bf{0}}&\Phi
	\end{pmatrix},
\end{align}
where, $\Psi \in \mathbb{R}^{p\times(n+q)}$ and $\Phi \in \mathbb{R}^{(n+q)\times(n+q)}$. This structure implies that the eigenvalue $1$ has an algebraic multiplicity of at least $p$, and the remaining $n+q$ eigenvalues are located in the spectrum of $\Phi$. Let $\mu_k,$ be an arbitrary eigenvalue of $\Phi$. Then, the characteristic polynomial of the preconditioned matrix $\mathcal{M}^{-1}\mathcal{A}$ is
\begin{align}\label{charpol}
(\mathcal{M}^{-1}\mathcal{A}-I)^p\prod_{k=1}^{n+q}(\mathcal{M}^{-1}\mathcal{A}-\mu_k I).
\end{align}
Now, consider the polynomial $(\mathcal{M}^{-1}\mathcal{A}-I)\displaystyle\prod_{k=1}^{n+q}(\mathcal{M}^{-1}\mathcal{A}-\mu_k I)$ of degree $n+q+1$, which divides the characteristic polynomial \eqref{charpol}. It can be rewritten as
\begin{align*}
(\mathcal{M}^{-1}\mathcal{A}-I)\prod_{k=1}^{n+q}(\mathcal{M}^{-1}\mathcal{A}-\mu_k I)=
\begin{pmatrix}
{\bf{0}}&\Psi \displaystyle\prod_{k=1}^{n+q}(\Phi-\mu_kI)\\
{\bf{0}}&(\Phi-I)\displaystyle\prod_{k=1}^{n+q}(\Phi-\mu_kI)
\end{pmatrix},
\end{align*}
According to the Cayley-Hamilton theorem, this polynomial is equal to the zero matrix. Thus, the degree of the minimal polynomial of $\mathcal{M}^{-1}\mathcal{A}$ is at most $n+q+1.$ This completes the proof.
\end{proof}

\begin{corollary}
	The upper bound on the degree of the minimal polynomial, established in Theorem \ref{Theorminimal}, has a direct implication for the convergence of the GMRES method. Since the degree of the minimal polynomial is equal to the dimension of the Krylov subspace $\mathcal{K}(\mathcal{M}^{-1}\mathcal{A},c)$ for a generic vector $c$, the upper bound directly implies that the GMRES algorithm will terminate in at most $n+q+1$ iterations (in exact arithmetic)\cite[Proposition 6.1]{SaadBook}.
\end{corollary}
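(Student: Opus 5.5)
The corollary is essentially a consequence of Theorem \ref{Theorminimal} combined with the standard finite-termination property of GMRES, so the plan is to make that link precise. Let $\mathcal{B}=\mathcal{M}^{-1}\mathcal{A}$ denote any of the four IBS preconditioned matrices. Let $c=\mathcal{M}^{-1}\tilde{b}-\mathcal{B}\mathbf{x}^{(0)}$ be the preconditioned initial residual. Both $\mathcal{A}$ (by the assumption that $A^TH_{p,q}A$ is SPD) and $\mathcal{M}$ (its diagonal blocks $I$, $\hat P$, $I$ are nonsingular) are nonsingular, so $\mathcal{B}$ is nonsingular.

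\textbf{Step 1: bound the Krylov dimension.} Theorem \ref{Theorminimal} gives a monic polynomial $\pi$ of degree $d\le n+q+1$ with $\pi(\mathcal{B})=\mathbf{0}$. Hence the grade of $c$, i.e.\ the degree $\nu$ of the minimal polynomial of $\mathcal{B}$ with respect to $c$, satisfies $\nu\le d\le n+q+1$. Consequently $\mathcal{K}_k(\mathcal{B},c)=\mathcal{K}_\nu(\mathcal{B},c)$ for all $k\ge\nu$, and $\mathcal{K}_\nu$ is $\mathcal{B}$-invariant. For generic $c$ the grade equals the degree of the full minimal polynomial, which is the sense of the phrase in the statement; for the bound itself we only need $\nu\le n+q+1$.

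\textbf{Step 2: exactness at step $\nu$.} Write the minimal polynomial of $\mathcal{B}$ with respect to $c$ as $\sum_{j=0}^{\nu}\gamma_j\mathcal{B}^jc=0$. Its constant coefficient satisfies $\gamma_0\neq0$: otherwise, since $\mathcal{B}$ is invertible, we could cancel a factor $\mathcal{B}$ and obtain an annihilating polynomial of lower degree. It follows that
\[
\mathcal{B}^{-1}c=-\gamma_0^{-1}\sum_{j=1}^{\nu}\gamma_j\mathcal{B}^{j-1}c\in\mathcal{K}_\nu(\mathcal{B},c).
\]
So the exact solution lies in $\mathbf{x}^{(0)}+\mathcal{K}_\nu$. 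Since GMRES minimizes the residual over this affine space, the residual at step $\nu$ is zero. This is exactly \cite[Proposition 6.1]{SaadBook}, which also says that GMRES breaks down (Arnoldi stops) only at step $\nu$ and that breakdown then delivers the exact solution.

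\textbf{Main obstacle.} The only delicate point is a wording issue: the statement identifies ``degree of the minimal polynomial'' with ``Krylov dimension for generic $c$''. The plan avoids depending on that identification by using only the inequality $\nu\le\deg\pi$, which holds for every starting vector. The termination bound $n+q+1$ is then uniform over all initial guesses, in exact arithmetic.
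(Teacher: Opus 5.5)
Your proposal is correct and takes the same route as the paper, which gives no argument beyond combining Theorem~\ref{Theorminimal} with the standard finite-termination property of GMRES cited from Saad. You make that link explicit: you bound the grade of the actual preconditioned initial residual by the degree of the annihilating polynomial from Theorem~\ref{Theorminimal}, and you use the nonsingularity of $\mathcal{M}^{-1}\mathcal{A}$ to place the exact solution in $\mathbf{x}^{(0)}+\mathcal{K}_\nu$, which avoids the ``generic vector'' identification and so covers every initial guess.
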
 
\section{Numerical experiments}\label{sec.4}
In this section, we present some numerical examples to demonstrate the effectiveness and robustness of the proposed preconditioners for solving the ILS problem \eqref{eq:ILS}.
It is well-known that applying a preconditioned Krylov subspace method, such as GMRES, requires solving a linear system of the form $\mathcal{M}z=r$ at each iteration. For large-scale and sparse problems, solving these inner systems exactly may  be computationally impractical. To overcome this, we solve the inner systems inexactly using a few iterations of CG method. Therefore, using the Flexible GMRES (FGMRES) method is an ideal choice \cite{FGMRES,SaadBook}. So, we employ the preconditioned version of it in our experiments.

We compare our proposed IBS preconditioners with two recent effective preconditioners, denoted by BS2 and BUT. The choice of BS2 is motivated by the observation in \cite{Xin} that the BS1, BS2, and BS3 schemes exhibit similar performance, with BS2 being slightly more efficient. For all experiments, the initial guess is the zero vector, and the approximation matrix $\hat{P}$ is chosen as 
$\hat{P} = \alpha I+P,$
with 
$$\alpha={\|A_1\|_1^2}.$$ 
All computations were performed in \textsc{Matlab} R2022a on a laptop with a 2.30 GHz Intel(R) Core(TM) i3-2348 CPU and 4 GB of RAM, running a Windows 10 operating system. 

The iteration of FGMRES method is terminated as soon as the relative residual norm
$$
\text{RES} = \frac{\|\tilde{b}-\mathcal{A}\mathbf{x}_k\|_2}{\|\tilde{b}\|_2},
$$
where $\textbf{x}_{k}$ is the approximate solution at iteration $k$, satisfies $\text{RES}<10^{-8}$, or when the number of iterations exceeds 2000.
Furthermore, the inner CG iteration is terminated once the relative residual norm falls below the tolerance of $10^{-3}$, or after a maximum of 1000 iterations.
The performance of all the preconditioned methods is evaluated based on the number of iterations (IT), the algorithm's running time in seconds (CPU), and the relative error of the approximate solution (ERR), defined as
$$
\text{ERR} = \frac{\| x_k - x^*\|_2}{\|x^*\|_2},
$$
where $x_k$ is the approximate solution, extracted from $\mathbf{x}_k$, and $x^*$ is exact solution for the ILS problem \eqref{eq:ILS}. The reported IT and CPU values are the average over five consecutive runs. In all tables, the notation $\dagger$ indicates that the corresponding method failed to converge to the solution of system \eqref{eq:block3x3} in 2000 iterations.

\begin{example}\label{ex:tols_model} \rm
In our first set of experiments, we evaluate the performance of the proposed IBS preconditioners on a series of matrices obtained from the well-known Matrix Market website \url{https://math.nist.gov/MatrixMarket/}. Specifically, we use four matrices TOLS340, TOLS1090, TOLS2000, and TOLS4000. These matrices arise from the stability analysis of an aircraft flight model, a problem studied at CERFACS. The key properties of these test matrices are summarized in Table \ref{tab:tols_properties}. It should be noted that all matrices were normalized to have a unit 1-norm prior to use. This scaling procedure is a standard practice to ensure a fair comparison between problems of different scales. In this case the value of $\alpha$ would equal 1. For each test problem, we set the matrix $A_1$ to be one of the TOLS matrices, thus setting $p=n$. We then construct the matrix $A_2$ by setting $q=10000$ and $A_2 = 6 I_{q \times n}$, where $I_{q \times n}$ is a $q \times n$ rectangular matrix with ones on its main diagonal and zeros elsewhere. The right-hand side vectors $b_1$ and $b_2$ are chosen as vectors of all ones.
	
\begin{table}
		\centering
		\caption{Properties of the test matrices for Example \ref{ex:tols_model}.} \label{tab:tols_properties}
		\vspace{0.25cm}
		\begin{tabular}{|c|ccc| } \hline
	matrix           &  ~~~~$n$~~~~    &  ~~~~ $nnz$ ~~~~  & ~~~~ Cond ~~~~ \\ \hline
	TOLS 340    &   340       &   2196        &        $2.4e+05$              \\ 
	TOLS1090   &   1090     &    3546       &        $2.1e+06$              \\ 
	TOLS2000   &   2000     &   5184        &        $6.9e+06$             \\ 
	TOLS4000   &   4000     &   8784        &        $2.7e+07$               \\ \hline		
\end{tabular}
	\end{table}
	
The numerical results are reported in Table \ref{tab:ex1_results}. The results clearly demonstrate the effectiveness of all the IBS preconditioners. A closer look at the table reveals two key observations. First, all four IBS preconditioners significantly outperform the competing BUT and BS2 preconditioners in every test case. Second, a comparison among the IBS preconditioners shows that the IBS2 and IBS4 variants consistently achieve the best performance in terms of both iteration count and CPU time.

\begin{table}[!hp]
	\centering
	\caption{Numerical results for Example \ref{ex:tols_model}.}
	\label{tab:ex1_results}
\vspace{0.25cm}
\begin{tabular}{|c|c|c|c|c|c|c|c|}
	\hline
		$m \times n$ &   & IBS1 & IBS2 & IBS3 & IBS4 & BS2 & BUT \\
		\hline
		\multirow{4}{*}{$10340 \times 340$} 
		& IT       & 40       & 31       & 40       & 31       & 324       & 361 \\
		& CPU      & 0.29   & 0.23   & 0.29   & 0.23   & 11.93   & 14.92 \\
		& RES      & 9.89e-10 & 5.69e-10 & 9.85e-10 & 2.94e-10 & 9.97e-10  & 9.37e-10 \\
		& ERR      & 3.32e-09 & 2.77e-09 & 3.35e-09 & 1.41e-09 & 4.81e-09  & 4.89e-09 \\
       \hline
		\multirow{4}{*}{$11090 \times 1090$} 
		& IT       & 41       & 31       & 41       & 31       & 1570      & 1561 \\
		& CPU      & 0.33   & 0.25   & 0.33   & 0.25   & 266.01  & 264.87 \\
		& RES      & 3.63e-10 & 8.19e-10 & 3.60e-10 & 3.69e-10 & 9.92e-10  & 9.54e-10 \\
		& ERR      & 6.68e-10 & 2.33e-09 & 6.71e-10 & 1.03e-09 & 8.20e-08  & 9.24e-08 \\
        \hline
		\multirow{4}{*}{$12000 \times 2000$} 
		& IT       & 41       & 31       & 41       & 31       &      &   \\
		& CPU      & 0.35   & 0.27   & 0.37   & 0.26   & $\dagger$        & $\dagger$ \\
		& RES      & 3.70e-10 & 8.90e-10 & 3.68e-10 & 3.79e-10 &          &   \\
		& ERR      & 5.42e-10 & 2.00e-09 & 5.45e-10 & 8.33e-10 &          &   \\
		\hline
		\multirow{4}{*}{$14000 \times 4000$} 
		& IT       & 40       & 31       & 40       & 31       & 131       & 1975 \\
		& CPU      & 0.38   & 0.29   & 0.38   & 0.29   & 12.82   & 687.31 \\
		& RES      & 9.92e-10 & 9.08e-10 & 9.87e-10 & 3.68e-10 & 1.45e-10  & 9.51e-10 \\
		& ERR      & 1.22e-09 & 1.63e-09 & 1.23e-09 & 6.38e-10 & 6.02e-11  & 1.16e-08 \\ 
		\hline
	\end{tabular}
\end{table}
\end{example}
\bigskip
\begin{example}
	\label{ex:sherman_model} \rm
	In our second set of experiments, we test the proposed preconditioners on problems arising from oil reservoir simulation. We use five matrices SHERMAN1, SHERMAN2, SHERMAN3, SHERMAN4, and SHERMAN5, which are derived from Matrix Market. These matrices are known to be challenging due to their structure and condition number. The key properties of these test matrices are summarized in Table \ref{tab:sherman_properties}. 
	
\begin{table}
		\centering
		\caption{Properties of the test matrices for Example \ref{ex:sherman_model}.} \label{tab:sherman_properties}
		\vspace{0.25cm}
	\begin{tabular}{|c|ccc| } \hline
	matrix           &  ~~~~$n$~~~~    &  ~~~~ $nnz$ ~~~~  & ~~~~ Cond ~~~~ \\ \hline
	SHERMAN1    &   1000       &   3750        &        $2.3e+04$              \\ 
	SHERMAN2   &   1080     &    23094       &        $1.4e+12$              \\ 
	SHERMAN3   &   5005     &   20033        &        $6.9e+16$             \\ 
	SHERMAN4   &   1104     &   3786        &        $7.2e+03$               \\ 
	SHERMAN5   &   3312     &   20793      &        $3.9e+05$  \\ \hline		
\end{tabular}
\end{table}

For each test problem, we set the matrix $A_1$ to be one of the Sherman matrices. Thus we have $p=n$. As in the previous example, all matrices were normalized to have a unit 1-norm before using them. For this set of experiments, we increase the size of the second block by setting $q=15000$ and construct the matrix $A_2 = 6 I_{q \times n}$. The right-hand side vectors $b_1$ and $b_2$ are again chosen as vectors of all ones. 
	
The numerical results are reported in Table \ref{tab:ex2_results}. A key observation from the results is that all proposed IBS preconditioners converge rapidly, requiring significantly fewer iterations and less CPU time than the competing methods. Furthermore, the small relative errors, ERR, indicate that this convergence to the solution of the system \eqref{eq:block3x3} also leads to an accurate approximation for the exact solution of the ILS problem. In contrast, while the BS2 and BUT methods sometimes converge (as seen by their small RES values), their corresponding ERR values are close to 1, indicating a complete failure to approximate the exact solution of ILS problem. This illustrates a critical advantage of our proposed IBS preconditioners.
\begin{table}[!ht]
	\centering
	\caption{Numerical results for Example \ref{ex:sherman_model}.}
	\label{tab:ex2_results}
	\vspace{0.25cm}
	\begin{tabular}{|c|c|c|c|c|c|c|c|}
		\hline
		$   m \times n $ &  & IBS1 & IBS2 & IBS3 & IBS4 & BS2 & BUT  \\
		\hline
		\multirow{4}{*}{$ 16000 \times 1000 $} 
		& IT       & 28       & 22       & 28       & 20       & 313       & 323  \\
		& CPU      & 0.26   & 0.24   & 0.27   & 0.22   & 24.12   & 24.79  \\
		& RES      & 5.05e-10 & 8.18e-10 & 5.04e-10 & 4.95e-10 & 9.07e-10  & 8.77e-10  \\
		& ERR      & 1.19e-09 & 7.53e-10 & 1.19e-09 & 1.45e-09 & 0.76    & 0.78  \\
		\hline
		\multirow{4}{*}{$ 16080 \times 1080 $} 
		& IT       & 49       & 35       & 49       & 37       & 907       & 907  \\
		& CPU      & 0.58  & 0.37   & 0.58   & 0.38   & 186.98  & 187.25  \\
		& RES      & 4.37e-10 & 9.12e-10 & 4.25e-10 & 8.48e-10 & 3.66e-11  & 3.66e-11  \\
		& ERR      & 1.46e-09 & 3.75e-09 & 1.47e-09 & 2.76e-09 & 1.00    & 1.00  \\
		\hline
		\multirow{4}{*}{$ 20005 \times 5005 $} 
		& IT       & 26       & 21       & 26       & 19       & 127       & 77  \\
		& CPU      & 0.32   & 0.26   & 0.32   & 0.24   & 25.37   & 15.10  \\
		& RES      & 8.14e-10 & 3.13e-10 & 8.14e-10 & 4.50e-10 & 2.96e-10  & 7.05e-10  \\
		& ERR      & 1.03e-09 & 6.88e-10 & 1.03e-09 & 7.00e-10 & 0.39   & 0.72  \\
		\hline
		\multirow{4}{*}{$ 16104 \times 1104 $} 
		& IT       & 24       & 19       & 24       & 19       & 267       & 371  \\
		& CPU      & 0.26   & 0.22   & 0.26   & 0.22   & 13.36   & 23.47  \\
		& RES      & 4.28e-10 & 5.69e-10 & 4.27e-10 & 2.21e-10 & 8.84e-10  & 7.43e-10  \\
		& ERR      & 1.16e-09 & 2.35e-09 & 1.16e-09 & 9.12e-10 & 1.91e-08  & 4.12e-08  \\
		\hline
		\multirow{4}{*}{$ 18312 \times 3312 $} 
		& IT       & 32       & 26       & 32       & 27       & 1673      & 723  \\
		& CPU      & 0.35   & 0.30   & 0.35   & 0.30   & 623.44  & 161.77  \\
		& RES      & 9.28e-10 & 5.63e-10 & 9.21e-10 & 2.36e-10 & 9.96e-10  & 8.79e-10  \\
		& ERR      & 1.82e-09 & 1.56e-09 & 1.83e-09 & 6.42e-10 & 2.80    & 1.31  \\
		\hline
	\end{tabular}
\end{table}
\end{example}

\begin{example}
\label{ex:hilbert} \rm
	In our final example, we test the robustness of the proposed preconditioners on problems constructed from the severely ill-conditioned Hilbert matrix, $H_n$. We construct a set of test problems by setting $A_1 = H_n$ and $p=q=n$ for various increasing sizes, $n = 400, 800, 1200, 1600, 10000$. The matrix $A_2$ is set to $A_2 = 0.7I_n$, while all other experimental settings are kept the same as in the previous examples.
		
		The numerical results are presented in Table \ref{tab:ex3_results}. It clearly shows the superiority of our proposed preconditioners. Due to the extreme ill-conditioning, both the BUT and BS2 preconditioned methods fail to converge in all the large test cases. In contrast, all the proposed IBS preconditioners solve these challenging problems efficiently. This experiment demonstrates the superior robustness of our inexact preconditioning technique when dealing with severely ill-conditioned systems. Consistent with our previous findings, the IBS2 and IBS4 schemes again perform the best. It can be observed that IBS4 is slightly faster in terms of CPU time.

\begin{table}[!ht]
	\centering
	\caption{Numerical results for Example \ref{ex:hilbert}.}
	\label{tab:ex3_results}
\vspace{0.25cm}
\begin{tabular}{|c|c|c|c|c|c|c|c|}
	\hline
		$m \times n$ &  & IBS1 & IBS2 & IBS3 & IBS4 & BS2 & BUT \\
		\hline
		\multirow{4}{*}{$800 \times 400$} 
		& IT       & 13       & 10       & 13       & 10       & 80        & 96 \\
		& CPU      & 0.15   & 0.14   & 0.15   & 0.14   & 7.01    & 9.78 \\
		& RES      & 1.97e-10 & 1.00e-13 & 2.17e-10 & 2.01e-14 & 1.54e-11  & 7.24e-10 \\
		& ERR      &2.72e-10&1.26e-13&3.12e-10&6.25e-14&5.80e-10&1.46e-08\\
		\hline
		\multirow{4}{*}{$1600 \times 800$} 
		& IT       & 14       & 10       & 14       & 10       & 98        & 85 \\
		& CPU      & 0.26   & 0.23   & 0.28   & 0.23   & 91.85   & 79.24 \\
		& RES      & 1.85e-12 & 3.32e-12 & 1.85e-12 & 1.79e-12 & 2.30e-10  & 6.92e-10 \\
		& ERR      &1.52e-11&1.71e-11&1.52e-11&1.69e-11&1.82e-09&7.32e-09\\
		\hline
		\multirow{4}{*}{$2400 \times 1200$} 
		& IT       & 14       & 10       & 14       & 10       & 100       & 82 \\
		& CPU      & 0.41   & 0.35   & 0.44   & 0.34   & 217.55  & 178.02\\
		& RES      & 2.44e-11 & 2.74e-11 & 2.45e-11 & 2.00e-10 & 8.56e-11  & 5.27e-10 \\
		& ERR      &2.01e-10&2.27e-10&2.02e-10&1.56e-10&4.71e-10&6.18e-09\\
		\hline
		\multirow{4}{*}{$3200 \times 1600$} 
		& IT       & 14       & 10       & 14       & 10       & 92        & 96 \\
		& CPU      & 0.66   & 0.51   & 0.67   & 0.50   & 369.12  & 377.64 \\
		& RES      & 1.41e-10 & 1.59e-10 & 1.39e-10 & 1.31e-10 & 9.93e-10  & 2.07e-10 \\
		& ERR      &1.16e-09&1.29e-09&1.15e-09&9.21e-10&3.21e-08&4.98e-09\\
		\hline
		\multirow{4}{*}{$20000 \times 10000$} 
		& IT       & 16       & 11       & 15       & 11       &     &  \\
		& CPU      & 24.90  & 18.22  & 24.61  & 18.18  & $\dag$    &   $\dag$  \\
		& RES      & 1.92e-10 & 1.22e-10 & 8.85e-10 & 8.98e-11 &   &  \\
		& ERR      &1.59e-09&1.01e-09&1.62e-09&7.35e-10&&\\
		\hline
	\end{tabular}
\end{table}
\end{example}

\section{Conclusions} \label{sec.5}
In this paper, we have proposed several inexact block-splitting preconditioners, denoted by IBS, for a class of large-scale indefinite least squares problems. A detailed theoretical analysis was provided, including the convergence conditions for the corresponding stationary methods and a detailed study of the eigenstructure of the preconditioned matrices. This analysis led to a theoretical bound on the number of GMRES iterations when our preconditioners are applied. Finally, numerical experiments demonstrated the efficiency and robustness of our proposed preconditioners. They performed well particularly  on large sparse, and ill-conditioned problems, where many other preconditioners often fail and always perform worse. This shows the great potential as a practical tool for real-world ILS problems.

\section*{Acknowledgements}
 The authors sincerely thank the reviewers for their meticulous review of the paper and for their constructive comments and suggestions.

\section*{Declarations}
\textit{Data availability:} Not applicable.\\
\textit{Funding:} Not applicable.\\
\textit{Conflict of interest:} The authors declare that they have no conflict of interest.\\
\textit{Author contributions:} Both theoretical and numerical experiments have been conducted by the authors.

\bibliographystyle{plain}

\end{document}